\newtheoremstyle{plainsl}%
	{\topsep}
	{\topsep}
	{\slshape} 
	{}
	{\normalfont\bfseries}
	{.}
	{ }
	{}
\theoremstyle{plainsl}
\newtheorem{theorem}{Theorem}[section]
\newtheorem{lemma}[theorem]{Lemma}
\newtheorem{prop}[theorem]{Proposition}
\newcommand\sqr[2]{{\vbox{\hrule height.#2pt
    \hbox{\vrule width.#2pt height#1pt \kern#1pt
        \vrule width.#2pt}\hrule height.#2pt}}}
\renewcommand\qed{%
	\ifmmode\eqno\sqr53
	\else\nolinebreak\ \hfill\sqr53\medbreak\fi}
\numberwithin{equation}{section}
\newcommand{\erd}{Erd\H{o}s}
\newcommand{\multichoose}[2]{\left(\! \binom{#1}{#2}\!\right)}
\newcommand\fim{{\mathcal G}} 
\newcommand\fis{{\mathcal F}} 
\newcommand\sfm[1]{\mathcal G_{(#1)}} 
\newcommand\pr{\,^\prime}
\newcommand \sfs[1]{\mathcal F_{(#1)}} 
\title{Intersection theorems for multisets}
\author{Karen Meagher\\ 
\small  Department of Mathematics and Statistics \\[-0.8ex]
\small University of Regina,  Regina, Saskatchewan, Canada\\[-0.8ex]
\small \texttt{karen.meagher@uregina.ca}\\
Alison Purdy\\
\small  Department of Mathematics and Statistics \\[-0.8ex]
\small University of Regina,  Regina, Saskatchewan, Canada\\[-0.8ex]
\small \texttt{purdyali@uregina.ca}
}
\begin{document}
\maketitle

\abstract{Let $k$, $t$ and $m$ be positive integers.  A $k$-multiset of $[m]$ is a collection of $k$ integers from the set $\{1,\dots, m\}$ in which the integers can appear more than once.  We use graph homomorphisms and existing theorems for intersecting and $t$-intersecting $k$-set systems to prove new results for intersecting and $t$-intersecting families of $k$-multisets.  These results include a multiset version of the Hilton-Milner theorem and a theorem giving the size and structure of the largest $t$-intersecting family of $k$-multisets of an $m$-set when $m \geq 2k-t$.}
\section{Introduction}\label{intro}

In this paper, we show that the method used in~\cite{MR2861399} to prove a natural extension of the famous \erd-Ko-Rado theorem to multisets can be used to prove additional intersection theorems for multisets.

We prove a multiset version of the Hilton-Milner theorem; this result gives the largest family of intersecting multisets satisfying the condition that the intersection of all of the multisets in the family is empty. We determine the largest family of $k$-multisets having the property that no more than $s$ of the multisets from the family can be pairwise disjoint. A related question that we answer is, what is the largest family of multisets that can be partitioned into two intersecting families? We also consider multisets that have intersection of size at least $t$, so are $t$-intersecting. We prove a theorem giving the size and structure of the largest $t$-intersecting family of $k$-multisets of an $m$-set when $m \geq 2k - t$.  Finally, we prove a version of the Hilton-Milner theorem for $t$-intersecting multisets.

In Section~\ref{intro}, we introduce notation and provide some background information on the \erd-Ko-Rado theorem and the known results for intersecting families of multisets.   Additional known results for intersecting set systems are stated in Section~\ref{int sets}.  In Section~\ref{int}, we extend the results from Section~\ref{int sets} to families of multisets.  Section~\ref{t-int} contains our  results concerning $t$-intersecting families of multisets.  In Section 5, we discuss some open problem for multisets.

\subsection{Notation and definitions}

Throughout this paper, small letters are used to denote integers, capital letters are used for sets (and multisets) of integers, and script capital letters are used for collections of objects. The set of integers from $x$ to $y$ inclusive is represented by $[x,y]$.  If $x=1$, this is simplified to $[y]$. 

 A $k$-set (or $k$-subset) is a set of cardinality $k$ and the collection of all $k$-subsets of $[n]$ is denoted by $\binom{[n]}{k}$.  We say that a collection of sets is \textsl{intersecting} if every pair of sets in the collection is intersecting and that it is $t$-intersecting if every pair has at least $t$ elements in common.  Collections are said to be \textsl{isomorphic} if one can be obtained from the other by a permutation of the underlying set.

\subsection{Background}

The \erd-Ko-Rado theorem is an important result in extremal set theory that gives the size and structure of the largest intersecting $k$-subset system from $[n]$.  It appeared in a paper published in 1961~\cite{MR0140419} 
 which contains two main theorems.  The first of these is commonly stated as follows:  
\begin{theorem}[\erd, Ko and Rado~\cite{MR0140419}]
Let $k$ and $n$ be positive integers with $n \geq 2k$.  If $\mathcal F$ is a collection of intersecting $k$-subsets of $[n]$, then \[|\mathcal F| \leq {n-1 \choose k-1}.\]  Moreover, if $n >2k$, equality holds if and only if $\mathcal F$ is a collection of all the $k$-subsets from $[n]$ that contain a fixed element from $[n]$.
\end{theorem}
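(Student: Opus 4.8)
The plan is to establish the upper bound by Katona's cyclic permutation method and then to treat the equality case separately; the latter is where the real work lies.

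For the bound, I would arrange the $n$ elements of $[n]$ around a circle and call a $k$-subset an \emph{arc} if its elements occupy $k$ consecutive positions. The crux is the following counting lemma: with respect to any single circular ordering, an intersecting family $\mathcal{F}$ can contain at most $k$ arcs. To prove this, fix an arc $A \in \mathcal{F}$, say $A = \{1,\dots,k\}$. Every arc that meets $A$ other than $A$ itself can be matched into $k-1$ pairs, where the $i$-th pair consists of the length-$k$ arc ending at position $i$ and the length-$k$ arc beginning at position $i+1$ (for $i = 1,\dots,k-1$); since $n \ge 2k$, the two arcs in each pair together occupy $2k$ distinct consecutive positions and so are disjoint, whence at most one of them lies in $\mathcal{F}$. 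Counting $A$ together with at most one arc from each of the $k-1$ pairs gives at most $k$ arcs, as claimed.

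I would then double count the incidences $(\sigma, S)$ in which $\sigma$ is a circular ordering of $[n]$ and $S \in \mathcal{F}$ is an arc in $\sigma$. There are $(n-1)!$ circular orderings, each containing at most $k$ arcs from $\mathcal{F}$, while a fixed $k$-set is an arc in exactly $k!(n-k)!$ of these orderings. Equating the two counts yields
\[
|\mathcal{F}|\, k!\,(n-k)! \;\le\; (n-1)!\, k,
\]
which rearranges to $|\mathcal{F}| \le \binom{n-1}{k-1}$.

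The main obstacle is the characterization of equality when $n > 2k$. Here I would use that equality forces every circular ordering to contain exactly $k$ arcs of $\mathcal{F}$, and invoke the elementary fact that $k$ pairwise-intersecting arcs on a circle of more than $2k$ points must share a common position. The delicate part is propagating this local conclusion across all orderings to show that a single element of $[n]$ lies in every member of $\mathcal{F}$, that is, that $\mathcal{F}$ is a star; alternatively, one can apply the shifting (compression) operators $S_{ij}$, which preserve both the intersecting property and the cardinality, reduce to a shifted extremal family, and verify directly that such a family with $n > 2k$ must be a star. I expect essentially all of the effort to be concentrated in this uniqueness argument, since the bound itself drops out of the counting immediately.
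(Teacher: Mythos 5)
The paper never proves this statement: it is the classical Erd\H{o}s--Ko--Rado theorem, quoted from~\cite{MR0140419} as background, so there is no internal argument to compare yours against. Judged on its own terms, your proposal has two parts of very different completeness. The upper bound via Katona's cycle method is correct and complete: your pairing lemma (an intersecting family contains at most $k$ arcs in any circular ordering) is proved properly, since every arc meeting the fixed arc $A=\{1,\dots,k\}$ other than $A$ itself either ends at some position $i\in[k-1]$ or begins at some position $i+1$, the two arcs in each such pair occupy $2k$ consecutive positions and hence are disjoint when $n\geq 2k$, and the double count $|\mathcal F|\,k!\,(n-k)!\leq k\,(n-1)!$ rearranges to the stated bound.

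The equality characterization for $n>2k$, however, is a genuine gap: you describe two possible routes and execute neither. On the first route, the ``elementary fact'' you invoke is indeed true --- the same pairing argument shows that the chosen arcs must be $S_2,\dots,S_{r+1},E_{r+1},\dots,E_{k-1}$ together with $A$ for some threshold $r$ (choosing an arc ending at $i$ and an arc starting at $j+1$ with $j\geq i$ would give two disjoint members of $\mathcal F$), and all of these contain position $r+1$ --- but you then say only that ``the delicate part is propagating this local conclusion across all orderings,'' which is precisely the entire content of the uniqueness statement; no argument for it is given. On the second route, the compressions $S_{ij}$ preserve the intersecting property and the cardinality, but they do not obviously preserve the property of \emph{not} being a star, so reducing to a shifted extremal family and checking that it is a star says nothing about the original family unless you also prove that a family whose compression is an extremal star must itself have been a star. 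That missing step is exactly the kind of argument this paper has to labor over in the multiset setting, in Lemmas~\ref{downcomp maintains structure} and~\ref{structure if r=0}, where it is shown that down-compression produces a family of the canonical form only if the original family already had that form. As it stands you have a full proof of the inequality and only a plan, with its hardest step absent, for the equality case.
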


A Kneser graph, denoted by $K(n,k)$, is a graph whose vertices are the $k$-subsets of $[n]$.  Two vertices are adjacent if and only if the corresponding $k$-subsets are disjoint.  Thus an independent set of vertices is an intersecting set system and the cardinality of the largest independent set is equal to $\binom{n-1}{k-1}$.  (It is assumed that $n \geq 2k$ since otherwise the graph would be the empty graph.)  

A second theorem in~\cite{MR0140419} gives the size and structure of the largest $t$-intersecting $k$-subset system provided that $n$ is sufficiently large relative to $k$ and $t$.  A later theorem due to Ahlswede and Khachatrian extends this result to all values of $n$, $k$ and $t$.  In virtually all cases, the families attaining maximum size are isomorphic to $\sfs{r}$, where $\sfs{r}$ is defined as follows.
For $t,k,n \in \mathbb N$, let
\[ \sfs{r}= \{A \in \binom{[n]}{k}: \left| A  \cap [t+2r]\right| \geq t+r\}.\] 

In the following statement of the theorem, it is assumed that $n >2k-t$ since for $n \leq 2k-t$, the collection of all $k$-subsets of $[n]$ is $t$-intersecting.

\begin{theorem}[Ahlswede and Khachatrian~\cite{MR1429238}]\label{AK}
Let $t$, $k$ and $n$ be positive integers with $t \leq k\leq n$ and let $r$ be a non-negative integer such that $r \leq k-t$.  \begin{enumerate} \item If \[(k-t+1)\left(2 + \frac{t-1}{r+1}\right) <n < (k-t+1)\left(2 + \frac{t-1}{r}\right),\]
then $\sfs{r}$ is the unique (up to isomorphism) $t$-intersecting $k$-set system with maximum size.  (By convention, $\frac{t-1}{r} = \infty$ for $r=0$.)\\

\item If $n=(k-t+1)(2 + \frac{t-1}{r+1})$, then $\left| \sfs{r}\right|= \left| \sfs{r+1} \right|$ and a system of maximum size will equal (up to isomorphism) $\sfs{r}$ or $\sfs{r+1}$.  
\end{enumerate}
\end{theorem}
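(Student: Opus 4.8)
The plan is to follow the compression (shifting) method together with an analysis of the extremal structure, since this is the Complete Intersection Theorem whose full generality in $n$ forces us well beyond the EKR-style arguments available from the Kneser graph. Let $\mathcal F \subseteq \binom{[n]}{k}$ be a $t$-intersecting family of maximum size. First I would reduce to the case where $\mathcal F$ is left-compressed (shifted): for $i<j$, the shift $S_{ij}$ replaces each $A \in \mathcal F$ with $j \in A$ and $i \notin A$ by $(A \setminus \{j\}) \cup \{i\}$ whenever the result is not already present. A routine check shows $S_{ij}$ preserves $|\mathcal F|$ and the $t$-intersecting property, and that iterating terminates in a shifted family. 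Hence it suffices to maximise over shifted $t$-intersecting families and then verify that the $\sfs{r}$ are, up to isomorphism, the only shifted families attaining the maximum.

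The heart of the argument is to pin down the structure of a maximal shifted $t$-intersecting family via its generating sets. A shifted family is determined by the collection $\mathcal G$ of its minimal members under the coordinatewise order, and $|\mathcal F|$ can be expressed as a sum over $\mathcal G$ of binomial coefficients counting the $k$-sets generated by each member. The key device is a sequence of local exchange operations (the pushing--pulling method): by repeatedly trading one generating set for another in a way that preserves the $t$-intersecting condition and does not decrease the size, one shows that any maximal family can be driven into the form generated by $\sfm{r}=\binom{[t+2r]}{t+r}$ for some $r$, that is, into some $\sfs{r}$.

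Once the extremal family is reduced to the one-parameter list $\{\sfs{r}\}_{0 \le r \le k-t}$, I would finish by comparing the sizes $|\sfs{r}|$ as functions of $n$ with $k$ and $t$ fixed. Writing $|\sfs{r}|$ as an explicit sum of products of binomial coefficients, the sign of $|\sfs{r+1}| - |\sfs{r}|$ flips exactly once as $n$ increases; solving for the crossover produces the thresholds $n=(k-t+1)(2 + \frac{t-1}{r+1})$ in part (1), and equality at such a threshold yields the two coexisting optima $\sfs{r}$ and $\sfs{r+1}$ in part (2).

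I expect the main obstacle to be the pushing--pulling step, namely proving that no maximal shifted $t$-intersecting family can have a generating structure other than $\sfm{r}$. This is where the delicate combinatorial bookkeeping lives, since one must control the effect of each local exchange on $|\mathcal F|$ uniformly across all admissible $n$; by contrast, the compression reduction and the final binomial-coefficient comparison are comparatively mechanical. It is precisely this structural optimisation that carries the full strength of the Ahlswede--Khachatrian argument and that cannot be shortcut using the earlier \erd-Ko-Rado material alone.
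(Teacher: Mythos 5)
This statement is the Ahlswede--Khachatrian Complete Intersection Theorem, which the paper does not prove at all: it is imported from~\cite{MR1429238} and used as a black box. So your attempt must be measured against the original Ahlswede--Khachatrian argument, and against that standard it has a genuine gap. The outer two layers of your plan are fine and are indeed routine: left-compression $S_{ij}$ preserves cardinality and the $t$-intersecting property, and the final comparison of the sizes $\left|\sfs{r}\right|$ as functions of $n$ (locating the crossover at $n=(k-t+1)\left(2+\frac{t-1}{r+1}\right)$) is mechanical. But the middle step --- showing that a maximum-size shifted $t$-intersecting family must, up to isomorphism, be one of the $\sfs{r}$ --- \emph{is} the theorem. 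You name the device (``pushing--pulling,'' generating sets) and then defer it: no exchange operation is defined, no verification that an exchange preserves the $t$-intersecting condition or does not decrease the size, no termination argument, and no proof that the process ends at $\binom{[t+2r]}{t+r}$ rather than at some other generating configuration. Naming the technique that Ahlswede and Khachatrian invented is a citation of their proof, not a reconstruction of it; this step occupies essentially their entire paper, and nothing in your sketch substitutes for it. (You acknowledge this yourself by calling it ``the main obstacle,'' which is an accurate self-assessment but confirms the proof is incomplete.)

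Two further points would need repair even if the pushing--pulling step were supplied. First, uniqueness does not pass automatically through compression: a family not isomorphic to any $\sfs{r}$ could conceivably compress to an extremal shifted family, so the statement ``it suffices to maximise over shifted families and then verify uniqueness among shifted families'' is too quick --- one must argue separately that the preimage under the compressions of an extremal family is itself isomorphic to $\sfs{r}$ (or to one of $\sfs{r},\sfs{r+1}$ in the boundary case of part (2)). Second, a notational slip: you write the generating collection as $\sfm{r}=\binom{[t+2r]}{t+r}$, but in this paper $\sfm{r}$ denotes the \emph{multiset} family $\left\{G \in \multichoose{[m]}{k}: \left|G \cap [t+2r]\right| \geq t+r\right\}$, which is a different object; reusing that symbol for the set of generators invites confusion between the families $\sfs{r}$, their generators, and their multiset analogues.
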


In this paper, we present intersection theorems for multisets.  A multiset is a generalization of a set in which an element may appear more than once.  We define the cardinality of a multiset as the total number of elements in the multiset including repetitions.  A $k$-multiset is a multiset of cardinality $k$ and the collection of all $k$-multisets containing elements from $[m]$ is denoted by $\multichoose{[m]}{k}$.  The size of this collection is denoted by $\multichoose{m}{k}$ and is equal to $\binom{m+k-1}{k}$.

Let $\mathrm{m}(i,A)$  denote the multiplicity of the element $i$ in the multiset $A$.  We define the intersection of two multisets, $A$ and $B$, as the multiset $C$ where $\mathrm{m}(i,C)=\min\{\mathrm{m}(i,A), \mathrm{m}(i,B)\}$ for each $i \in [m]$.  This definition can be applied to the intersection of a multiset and a set by considering the set to be a multiset where each element has multiplicity one.  For a multiset, $A$, the \textsl{support} of $A$, denoted $S_{A}$, is the set of distinct elements of $A$.  Thus $S_{A} = A \cap [m]$.

A collection of $k$-multisets from $[m]$ is intersecting if every pair of multisets from the collection is intersecting and it is $t$-intersecting if the intersection of every pair of multisets is a multiset with cardinality at least equal to $t$.   
A graph homomorphism was used in~\cite{MR2861399} to prove the next theorem which extends a previous result of Brockman and Kay~\cite{bk}.  This graph homomorphism is used in Section~\ref{int} to prove additional theorems concerning intersecting collections of multisets.

\begin{theorem}[Meagher and Purdy~\cite{MR2861399}]\label{m>k}
Let $k$ and $m$ be positive integers with $m \geq k +1 $.  If $\fim$ is a collection of intersecting $k$-multisets of $[m]$, then \[|\fim| \leq \binom{m + k - 2}{ k-1}.\]  Moreover, if $m >k + 1$, equality holds if and only if $\fim$ is a collection of all the $k$-multisets of $[m]$ that contain a fixed element from $[m]$.
\end{theorem}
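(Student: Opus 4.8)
The plan is to phrase the problem in the language of independent sets and to compare the ``multiset Kneser graph'' with an ordinary Kneser graph through a single edge-preserving bijection. Let $M(m,k)$ be the graph whose vertices are the $k$-multisets of $[m]$, two multisets being adjacent exactly when they are disjoint (equivalently, when their supports are disjoint). An intersecting family is then precisely an independent set of $M(m,k)$, so the theorem is the assertion that the independence number of $M(m,k)$ equals $\binom{m+k-2}{k-1}$, together with a description of the extremal independent sets. The lower bound is immediate: for a fixed $i\in[m]$, the collection of all $k$-multisets of $[m]$ containing $i$ is intersecting, and a direct count gives $\multichoose{m}{k}-\multichoose{m-1}{k}=\binom{m+k-2}{k-1}$. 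Thus the entire content is the matching upper bound and the uniqueness statement.

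For the upper bound I would exploit that $M(m,k)$ and the ordinary Kneser graph $K(m+k-1,k)$ have the same number of vertices, namely $\multichoose{m}{k}=\binom{m+k-1}{k}$. The key step is to construct a graph homomorphism $\theta\colon K(m+k-1,k)\to M(m,k)$, that is, a bijection from the $k$-subsets of $[m+k-1]$ to the $k$-multisets of $[m]$ that sends any two disjoint sets to two multisets with disjoint support. Granting such a $\theta$, a standard transfer argument finishes the bound: since $\theta$ is a vertex-bijection carrying every edge of $K(m+k-1,k)$ to an edge of $M(m,k)$, the graph $M(m,k)$ contains $K(m+k-1,k)$ as a spanning subgraph, so every independent set of $M(m,k)$ is also independent in $K(m+k-1,k)$. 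Hence the independence number of $M(m,k)$ is at most that of $K(m+k-1,k)$, and because $m\ge k+1$ gives $m+k-1\ge 2k$, the \erd--Ko--Rado theorem yields $\binom{m+k-2}{k-1}$ for the latter. Combined with the lower bound this proves $|\fim|\le\binom{m+k-2}{k-1}$.

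For the uniqueness clause when $m>k+1$, suppose $\fim$ meets the bound, so $\fim$ is a maximum independent set of $M(m,k)$. Because $\theta$ is a bijection, $\theta^{-1}(\fim)$ is an independent set of $K(m+k-1,k)$ of the same size $\binom{m+k-2}{k-1}$, hence a maximum one. Since $m>k+1$ forces $m+k-1>2k$, the uniqueness half of the \erd--Ko--Rado theorem forces $\theta^{-1}(\fim)$ to be a star of $K(m+k-1,k)$, namely all $k$-subsets containing a fixed point. Pushing this star forward through $\theta$ and using the explicit description of $\theta$ on stars then identifies $\fim$ as the collection of all $k$-multisets of $[m]$ that contain a fixed element, as claimed.

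The main obstacle is the construction of the homomorphism $\theta$ and the verification of its behaviour on stars. The naive stars-and-bars bijection, which sends a sorted multiset $a_1\le\cdots\le a_k$ to the set $\{a_1,a_2+1,\dots,a_k+(k-1)\}$, does not preserve disjointness in either direction, so some care is required to choose a bijection under which disjoint sets always map to multisets with disjoint support. Establishing that this map is edge-preserving, and then tracking how it carries the stars of $K(m+k-1,k)$ onto the multiset-stars, is the technical heart of the argument; everything else is the routine homomorphism-to-independent-set transfer together with an appeal to \erd--Ko--Rado.
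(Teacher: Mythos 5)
Your strategy is exactly the one behind the paper's citation of this result (Proposition~\ref{homomorphism} plus the \erd--Ko--Rado theorem: pull an intersecting family of multisets back through a bijective graph homomorphism to an independent set in $K(m+k-1,k)$). But as a proof it has a genuine gap: the homomorphism $\theta$ is never constructed. You yourself flag its construction as ``the technical heart'' and leave it open, yet it is the entire non-trivial content of the theorem --- the transfer of independent sets, the appeal to \erd--Ko--Rado, and the two counts are all routine once $\theta$ exists. A proof that postulates $\theta$ proves nothing, and your uniqueness step compounds the problem: it invokes ``the explicit description of $\theta$ on stars'' when no description has been given, and it silently assumes the star produced by \erd--Ko--Rado uniqueness is centred at a point of $[m]$ rather than of $[m+1,m+k-1]$.

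The gap is fillable, and the right invariant is the one in the paper's Proposition~\ref{homomorphism}: demand that the support of $\theta(B)$ equal $B\cap[m]$ for every $k$-set $B\subseteq[n]$, $n=m+k-1$. With this property, edge-preservation is automatic --- if $A\cap B=\emptyset$ then $(A\cap[m])\cap(B\cap[m])=\emptyset$, so $\theta(A)$ and $\theta(B)$ have disjoint supports --- so no cleverness about disjointness is needed beyond the support condition. Such a bijection exists by matching fibers: for $S\subseteq[m]$ with $|S|=j\leq k$, the $k$-subsets of $[n]$ whose intersection with $[m]$ is exactly $S$ number $\binom{n-m}{k-j}=\binom{k-1}{k-j}$, while the $k$-multisets of $[m]$ with support exactly $S$ number $\binom{k-1}{j-1}$; these agree, so the fibers can be matched by arbitrary bijections. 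The same support property repairs uniqueness: a star of $K(n,k)$ centred at $y\in[m]$ pushes forward precisely to the multiset star at $y$, while a centre $y\in[m+1,n]$ is impossible, since then $\{x_1\}\cup[m+1,n]$ and $\{x_2\}\cup[m+1,n]$ (for distinct $x_1,x_2\in[m]$) both contain $y$, yet their images have disjoint supports $\{x_1\}$ and $\{x_2\}$, contradicting that $\fim$ is intersecting.
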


A result concerning the size of the largest $t$-intersecting families of $k$-multisets from $[m]$ was recently proved by  F{\"u}redi, Gerbner and Vizer~\cite{FGV} using an operation which they call \textsl{down-compression}.   In the following theorem, $AK(m\! +\! k\!-\!1,\,k,\,t)$ represents the size of the largest $t$-intersecting collection of $k$-subsets from $[m\!+\!k\!-\!1]$ given by Theorem~\ref{AK}.

\begin{theorem}[F{\"u}redi, Gerbner and Vizer~\cite{FGV}]\label{furedi}
Let $1 \leq t \leq k$ and let $2k-t \leq m$.  If $\fim$ is a $t$-intersecting family of $k$-multisets containing elements from $[m]$, then $$\left| \fim \right| \leq AK(m\!+\!k\!-\!1,\,k,\,t)\:.$$
\end{theorem}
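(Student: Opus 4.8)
The plan is to reduce the multiset problem to the set problem solved by Theorem~\ref{AK}. By definition $AK(m+k-1,k,t)$ is the size of the largest $t$-intersecting family of $k$-subsets of $[m+k-1]$, and the two ground objects are linked by the classical stars-and-bars bijection: writing a $k$-multiset as $A=\{a_1\le\cdots\le a_k\}$, set $\beta(A)=\{\,a_i+i-1 : 1\le i\le k\,\}\in\binom{[m+k-1]}{k}$. If $\beta$ turned $t$-intersecting families into $t$-intersecting families we would be done at once, since then $\beta(\fim)$ would be a $t$-intersecting subfamily of $\binom{[m+k-1]}{k}$ of the same size, bounded by $AK(m+k-1,k,t)$.

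The difficulty --- and the reason this requires genuine work --- is that $\beta$ does not respect intersection sizes, because sorting and shifting can both erase and manufacture common elements. Thus $\{1,2\}$ and $\{2,5\}$ share one element as multisets but have images $\{1,3\}$ and $\{2,6\}$ that are disjoint, while $\{1,1\}$ and $\{2,2\}$ are disjoint multisets whose images $\{1,2\}$ and $\{2,3\}$ intersect. Consequently neither $\beta$ nor $\beta^{-1}$ is a graph homomorphism between the relevant Kneser graphs, so the homomorphism technique that handles the simple intersecting case (Theorem~\ref{m>k}) does not transfer the upper bound, and a different mechanism is needed to control the distortion.

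The mechanism is down-compression. The idea is to introduce shifting operators acting on a $t$-intersecting family $\fim$ of $k$-multisets that steadily move the family toward a canonical form --- lowering repeated multiplicities and concentrating the support on the smallest available coordinates --- while proving two invariants: each operator leaves $|\fim|$ unchanged, and each operator preserves the $t$-intersecting property. Iterating until no operator applies yields a fully compressed family $\mathcal{H}$ with $|\mathcal{H}|=|\fim|$ that is still $t$-intersecting. The operators are designed so that on the resulting canonical multisets the pathology above cannot arise and $\beta$ becomes intersection-faithful on $\mathcal{H}$; then $\beta(\mathcal{H})$ is an honest $t$-intersecting family in $\binom{[m+k-1]}{k}$, and Theorem~\ref{AK} gives $|\fim|=|\mathcal{H}|=|\beta(\mathcal{H})|\le AK(m+k-1,k,t)$.

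I expect the main obstacle to be verifying that the compression operators preserve $t$-intersection: one must show that lowering a multiplicity or pushing an element downward can never drop a pairwise intersection below $t$, and that the terminal form is indeed faithful under $\beta$. The hypothesis $m\ge 2k-t$ should be exactly what guarantees there is enough room in $[m+k-1]$ to perform these shifts without forcing collisions, so that the extremal configurations match the families $\sfs{r}$ furnished by Theorem~\ref{AK}; checking the boundary $m=2k-t$, where --- unlike for sets --- not every pair of $k$-multisets is automatically $t$-intersecting, will need separate attention.
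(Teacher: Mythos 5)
Your overall skeleton --- compress the family while preserving its size and $t$-intersection, transfer the compressed family to $k$-subsets of $[m+k-1]$, then invoke Theorem~\ref{AK} --- is exactly the route the paper takes (Theorem~\ref{down comp} followed by Theorem~\ref{furedi 2}). But the step that carries all the weight in your plan is false as stated: down-compression does \emph{not} make the stars-and-bars map $\beta$ intersection-faithful. What down-compression actually delivers (Theorem~\ref{down comp}) is a family of the same size whose members' \emph{supports} pairwise $t$-intersect, i.e.\ $\left| F_{1} \cap F_{2} \cap [m] \right| \geq t$ for all pairs; in particular it never alters a family of multiplicity-free multisets, since such a family already has $[m]$ as a $t$-kernel. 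Your own counterexample therefore destroys your mechanism: with $t=1$, $k=2$, $m=5 \geq 2k-t$, the family $\{\{1,2\},\{2,5\}\}$ is terminal for down-compression, yet $\beta$ sends it to the disjoint pair $\{1,3\}$, $\{2,6\}$. The pathology is caused by the index shift built into $\beta$, not by repeated elements, so no amount of compression removes it.

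The missing ingredient is that the transfer map cannot be stars-and-bars; it must be a bijection $f:\binom{[n]}{k} \rightarrow \multichoose{[m]}{k}$, $n=m+k-1$, with the property that the support of $f(B)$ equals $B \cap [m]$ --- this is Proposition~\ref{t-int hom}, and it makes $f$ a graph homomorphism from $K(n,k,t)$ to $M\pr(m,k,t)$. With that property the compressed family $\mathcal H$ is an independent set in $M\pr(m,k,t)$, hence $f^{-1}(\mathcal H)$ is an independent set in $K(n,k,t)$, i.e.\ an honest $t$-intersecting family of $k$-sets of the same cardinality, and Theorem~\ref{AK} gives $|\fim| = |\mathcal H| = |f^{-1}(\mathcal H)| \leq AK(m+k-1,k,t)$. (Your closing remark about the role of $m \geq 2k-t$ is essentially right: it is the hypothesis needed for the down-compression step, not for the transfer.) So the proof can be completed, but only after replacing $\beta$ by the support-preserving bijection; as written, the reduction to Theorem~\ref{AK} fails.
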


Since $m \geq 2k-t$ ensures that $m \geq t+2r$ where $r$ is given by Theorem~\ref{AK} for $n=m+k-1$, this size can be attained by a family of multisets of the form $$\sfm{r}=\left\{G \in \multichoose{[m]}{k}: \,\left| G \cap [t+2r] \right| \geq t+r\right\}.$$  In Section~\ref{t-int}, we use the down-compression operation from~\cite{FGV} and a graph homomorphism to add the structure of the families that attain this size.  

\section{Theorems for set systems}\label{int sets}


In this section, we state some additional known results inspired by the \erd--Ko--Rado theorem.  These theorems are used in Section~\ref{int} to prove analagous results for multisets.

Define an intersecting family of $k$-sets as follows: $$\widetilde{\fis} = \left\{F \in \binom{[n]}{k}: 1 \in F \mbox { and } F \cap [2,k\!+\!1]\neq \emptyset\right\} \cup [2,k\!+\!1].$$ Then  $$| \widetilde{\fis} | = \binom{n-1}{k-1} - \binom{n-k-1}{k-1} +1,$$ where $\binom{n-1}{k-1}$ counts the $k$-subsets of $[n]$ that contain $1$ and $\binom{n-k-1}{k-1}$ counts the $k$-subsets containing $1$ that do not intersect with $[2,k\!+\!1]$.  

In~\cite{MR0219428}, Hilton and Milner considered the size of the largest intersecting family of $k$-subsets of $[n]$ where there is no element common to all subsets.  They proved the following theorem which disproves a conjecture in~\cite{MR0140419}.

\begin{theorem}[Hilton and Milner~\cite{MR0219428}]\label{HM}
Let $k$ and $n$ be positive integers with $2\leq k \leq n/2$.  Let $\fis$ be an  intersecting family of $k$-subsets of $[n]$.  If $$\bigcap_{F \in \fis} F = \emptyset,$$ then $$\left| \fis \right| \leq \binom{n-1}{k-1} - \binom{n-k-1}{k-1}+1.$$
For $3< k < n/2$, only families isomorphic to $\widetilde{\fis}$ will attain this size.
\end{theorem}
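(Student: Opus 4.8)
The plan is to prove the bound by the shifting (compression) method, which matches the tools used elsewhere in this paper. For $i<j$ recall the $(i,j)$-shift $S_{ij}$, which replaces each member $F$ of a family by $(F\setminus\{j\})\cup\{i\}$ whenever $j\in F$, $i\notin F$, and the resulting set is not already present. Two facts are standard: $S_{ij}$ leaves $|\fis|$ unchanged and preserves the intersecting property. I would first try to reduce to a family that is stable under every shift. The one genuinely delicate point --- and the main obstacle --- is that shifting need not preserve the hypothesis $\bigcap_{F\in\fis}F=\emptyset$: a single shift can convert a family with no common element into a star. I would address this by performing only those shifts that keep the common intersection empty, and by showing separately that if the sole available shifts would create a common element, then $\fis$ is already strictly smaller than the target bound; granting this, it suffices to bound a fully shifted intersecting family with empty common intersection.

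For such a shifted family I would invoke the structural observation that an empty common intersection forces $B:=[2,k+1]\in\fis$. Indeed, some member $A$ omits the element $1$; every element of $A$ exceeds $1$, so repeatedly shifting these elements downward (never onto $1$) drives $A$ to $[2,k+1]$, which therefore belongs to $\fis$. Since $\fis$ is intersecting, every member meets $B$. The members that contain $1$ are then $k$-sets containing $1$ and meeting $B$, and there are at most $\binom{n-1}{k-1}-\binom{n-k-1}{k-1}$ of these, the subtracted term counting the $k$-sets through $1$ whose other $k-1$ elements are drawn from the $n-k-1$ points of $[k+2,n]$ and so avoid $B$. It then remains to show that the members omitting $1$ contribute only $B$ itself, accounting for the final $+1$.

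The technical heart is thus twofold: (a) justifying the reduction by proving that a shift which introduces a common element can only happen when $|\fis|$ is already below $\binom{n-1}{k-1}-\binom{n-k-1}{k-1}+1$; and (b) bounding, inside a shifted intersecting family with empty common intersection, the members that avoid $1$. For (b) I would use that in a shifted family every such member contains $2$ and meets $B$, and argue --- via the shifted closure together with $n\ge 2k$ --- that the presence of any such set other than $B$ would force enough additional structure to either restore a common element or push the count back into case (a). I expect (a) to be the principal obstacle, since it is exactly the place where the ``no common element'' constraint and the shifting operation pull against each other.

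Finally, for the uniqueness assertion when $3<k<n/2$, I would track the equality conditions back through the sequence of shifts. Equality forces the star part to be complete (every $k$-set through $1$ that meets $B$ is present) and $B$ to be the unique member off the star; the strict inequalities $n>2k$ and $k>3$ are what eliminate the small sporadic extremal configurations, which is precisely why the hypothesis excludes $k\le 3$. Because shifts act invertibly up to isomorphism on an extremal family, any $\fis$ attaining the bound is then isomorphic to $\widetilde{\fis}$.
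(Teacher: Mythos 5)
You should first note that the paper never proves this statement at all: Theorem~\ref{HM} is quoted as a classical result of Hilton and Milner and used as a black box (in the proof of Theorem~\ref{Hilton Milner}), so your attempt has to be judged against the known proofs rather than against anything in this paper. Judged that way, it has two genuine gaps, one of which rests on a false expectation.

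The decisive flaw is your step (b). You expect that in a shifted intersecting family with $\bigcap_{F\in\fis}F=\emptyset$ the members avoiding $1$ ``contribute only $B=[2,k+1]$ itself.'' That is false. The family $\sfs{1}=\{A\in\binom{[n]}{k}:|A\cap[3]|\geq 2\}$ (the $t=1$ case of the paper's $\sfs{r}$) is shifted, intersecting, and has empty common intersection, yet it contains $\binom{n-3}{k-2}$ members avoiding $1$, namely all $k$-sets containing $\{2,3\}$ but not $1$. Its size is below the Hilton--Milner bound when $3<k<n/2$, so it is consistent with the theorem, but it shows that shiftedness plus empty common intersection cannot force your claimed structure: the extra sets avoiding $1$ neither ``restore a common element'' nor push the family into your case (a). What the actual proof requires --- and what is entirely missing here --- is a quantitative trade-off: each member $G$ with $1\notin G$ forces every $k$-set that contains $1$ and is disjoint from $G$ out of the family, and one must show (using shiftedness) that this loss among the sets containing $1$ outweighs the gain of $|\{G\in\fis:1\notin G\}|$. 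That counting argument is the heart of the Hilton--Milner theorem; without it you have no bound at all, only a bound on the star part.

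Step (a) is also unproven and is not a routine reduction. If a shift $S_{ij}$ creates a common element (necessarily $i$), all you can immediately conclude is that every member of $\fis$ meets $\{i,j\}$, which gives only $|\fis|\leq\binom{n-1}{k-1}+\binom{n-2}{k-1}$ (compare Theorem~\ref{bipartite}); this is \emph{weaker} than the Hilton--Milner bound, not stronger, so your claim that such a family ``is already strictly smaller than the target bound'' does not follow from anything you have said, and proving a usable version of it is essentially as hard as the theorem itself. The standard shifting proofs (Frankl's, and the treatment in Frankl--Tokushige) confront exactly this configuration with a separate structural analysis rather than dismissing it. Finally, the uniqueness argument --- ``shifts act invertibly up to isomorphism on an extremal family'' --- is not correct as stated, since shifting is not invertible; tracking equality back through shifts requires its own argument, though this is secondary to the two gaps above.
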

Hilton and Milner observed that  $| \widetilde{\fis} | = | \sfs{1} |$ when $k=2$, $k=3$ or $n=2k$.  If $k=2$,  then $\widetilde{\fis} = \sfs{1}$.  However, $\widetilde{\fis}$ and $ \sfs{1}$ are not isomorphic if $k=3$ or $n=2k$.  

In~\cite{HR}, Hajnal and Rothschild generalize the \erd--Ko--Rado theorem to families of $k$-subsets of $[n]$ that are not completely pairwise $t$-intersecting.  These families are defined as having the property that, for $s,t \geq 1$, no more than $s$ of the $k$-subsets have pairwise fewer than $t$ elements in common.  We denote this property by $P(s,t)$.   A family with property $P(s,t)$ can be constructed by choosing $s$ disjoint $t$-subsets of $[n]$ and taking all of the $k$-subsets that contain at least one of these $t$-subsets.  Such a family is said to be \textsl{fixed} by the $t$-subsets. 

\begin{theorem}[Hajnal and Rothschild~\cite{HR}]\label{thmHJ}

Let $\fis$ be a family of $k$-subsets of $[n]$.  Suppose $\fis$ satisfies $P(s,t)$.  Then there is a function $n(k,s,t)$ such that if $n > n(k,s,t)$, $$\left| \fis \right| \leq \sum_{j=1}^{s}(-1)^{j+1}\binom{s}{j}\binom{n-jt}{k-jt}.$$  Equality holds if only if $\fis$ is the family of all $k$-subsets of $[n]$ fixed by some $s$ disjoint $t$-subsets of $[n]$.

\end{theorem}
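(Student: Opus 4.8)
The plan is to prove the extremal \emph{value} first and then the upper bound together with its uniqueness. For the value I would begin from a family fixed by $s$ pairwise disjoint $t$-subsets $\seq{T}{1}{s}$ of $[n]$, namely $\fis_0=\{F\in\binom{[n]}{k}:T_i\subseteq F\text{ for some }i\}$, and compute $|\fis_0|$ by inclusion--exclusion. Since the cores are disjoint, a chosen union of $j$ of them has exactly $jt$ elements and is contained in $\binom{n-jt}{k-jt}$ of the $k$-sets, while there are $\binom{s}{j}$ ways to pick which $j$ cores must be contained; this yields $|\fis_0|=\sum_{j=1}^{s}(-1)^{j+1}\binom{s}{j}\binom{n-jt}{k-jt}$ and shows the stated bound is attained. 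Note that the case $s=1$ is precisely the $t$-intersecting \erd--Ko--Rado theorem for large $n$, so the argument below should specialize to it; it remains to show that no family with property $P(s,t)$ is larger, and that for $n$ large only fixed families reach the bound.

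For the upper bound I would first extract a skeleton. Choose a maximal subcollection $\mathcal A=\{\seq{A}{1}{p}\}\subseteq\fis$ whose members pairwise share fewer than $t$ elements; property $P(s,t)$ forces $p\le s$. By maximality, every $F\in\fis$ shares at least $t$ elements with some $A_i$, hence contains at least one $t$-subset of some $A_i$. Thus $\fis$ is covered by the stars over the $t$-subsets in $\mathcal T=\{T:|T|=t,\ T\subseteq A_i\text{ for some }i\}$, where $|\mathcal T|\le s\binom{k}{t}$. A crude union bound over $\mathcal T$ already gives $|\fis|=O\!\left(\binom{n-t}{k-t}\right)$, but with the wrong constant, so the genuine task is to reduce the effective number of covering cores from $s\binom{k}{t}$ down to $s$. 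The tool for this is the \erd--Rado sunflower lemma combined with the observation that $P(s,t)$ forbids any sunflower with more than $s$ petals whose core has size less than $t$, since the petals of such a sunflower would be pairwise non-$t$-intersecting; equivalently, every sunflower in $\fis$ with more than $s$ petals has a core of size at least $t$.

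With this in hand I would classify the covering cores as \emph{heavy} or \emph{light} according to whether $\deg(T):=|\{F\in\fis:T\subseteq F\}|$ exceeds a threshold $c(k,s,t)\binom{n-t-1}{k-t-1}$. Applying the sunflower lemma inside a heavy star produces a large sunflower whose core, by the observation above, has size at least $t$ and hence must equal $T$; pushing this further shows that the heavy cores would form a forbidden configuration unless there are at most $s$ of them, and that their pairwise intersections are controlled. The total contribution of the light cores is $o\!\left(\binom{n-t}{k-t}\right)$ for $n$ large, so up to this negligible error $|\fis|$ is the size of a union of at most $s$ full stars, which inclusion--exclusion bounds by $\sum_{j=1}^{s}(-1)^{j+1}\binom{s}{j}\binom{n-jt}{k-jt}$, with the maximum forced exactly when there are $s$ heavy cores, they are pairwise disjoint, and each star is complete. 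The hard part will be the quantitative bookkeeping in this last step: choosing the threshold and the sunflower parameters so that the light cores and the overlaps among heavy cores are provably of lower order than $\binom{n-t}{k-t}$. This is exactly what pins down the unspecified bound $n>n(k,s,t)$ and delivers the uniqueness of $\fis$ up to isomorphism.
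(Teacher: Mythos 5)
You should first know that the paper contains no proof of Theorem~\ref{thmHJ}: it is quoted from Hajnal and Rothschild~\cite{HR} and used as a black box, so your proposal must stand or fall on its own merits. Its architecture---the delta-system (sunflower) method---is a legitimate known route to results of this type, and your first half is sound: the inclusion--exclusion count of a fixed family, the extraction of a maximal pairwise non-$t$-intersecting skeleton $A_1,\dots,A_p$ with $p\le s$, the covering of $\fis$ by the at most $s\binom{k}{t}$ stars of $t$-subsets of the $A_i$, and the observation that $P(s,t)$ forbids sunflowers with more than $s$ petals whose core has size less than $t$. But there is already a local gap in the heavy-star step: if every member of the star contains $T$, the sunflower lemma produces a sunflower whose core \emph{contains} $T$, and your ``hence must equal $T$'' is a non sequitur---the petals could all share extra elements beyond $T$. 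To get a sunflower with core exactly $T$ you need instead a greedy argument choosing pairwise disjoint sets in the link of $T$ (each chosen $(k-t)$-set blocks at most $(k-t)\binom{n-t-1}{k-t-1}$ others), and it is this greedy count, not the sunflower lemma, that dictates the threshold $c(k,s,t)$.

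The second gap is fatal to the exact statement. Your final step bounds $|\fis|$ by the union of at most $s$ heavy stars \emph{plus} the light-core contribution, and you propose to finish by making the latter ``of lower order than $\binom{n-t}{k-t}$.'' That can never deliver the theorem: the light-core error is of order $n^{k-t-1}$, while the inclusion--exclusion corrections $\binom{s}{j}\binom{n-jt}{k-jt}$ with $j\ge 2$ are of order $n^{k-2t}$, which for $t\ge 2$ is \emph{smaller} than your error (and of the same order when $t=1$). So ``target plus negligible error'' is strictly weaker than the target, and the uniqueness claim is entirely out of reach. The missing idea is structural, not quantitative: for a family of maximum size one must show the light contribution is exactly \emph{zero}. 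Concretely, if $\fis$ has maximum size then it has exactly $s$ heavy cores $T_1,\dots,T_s$ (fewer would already put $|\fis|$ below the target for large $n$), and if some $F_0\in\fis$ contained none of them, then choosing one petal $F_i$ from the sunflower with core exactly $T_i$, for each $i$, so that the petals avoid $F_0$, avoid the other cores, and avoid each other outside their own cores, one gets $F_i\cap F_0\subseteq T_i\cap F_0$ and $F_i\cap F_j= T_i\cap T_j$, all of size less than $t$; thus $\{F_0,F_1,\dots,F_s\}$ violates $P(s,t)$. Hence every member of $\fis$ contains some $T_i$, so $\fis$ is covered by these $s$ stars exactly, and only then does inclusion--exclusion force the cores to be pairwise disjoint, force every star to be full, and yield both the stated bound and the uniqueness of the extremal families.
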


When $t=1$, the number of $k$-sets of $[n]$ fixed by $s$ disjoint $1$-subsets of $[n]$ is equal to the number of $k$-sets that intersect a given $s$-set.  Thus $$\sum_{j=1}^{s}(-1)^{j+1}\binom{s}{j}\binom{n-j}{k-j}=\binom{n}{k}-\binom{n-s}{k}.$$  If $t=s=1$, a family satisfying $P(s,t)$ is an intersecting family of $k$-subsets and the theorem is equivalent to the \erd--Ko--Rado theorem.   For $t=1$ and $s \geq 1$, a family with property $P(s,t)$ corresponds to the vertex set of an induced subgraph of the Kneser graph that does not contain a clique of size $s+1$.  In particular, a family with property $P(2,1)$ will correspond to a triangle-free subgraph of $K(n,k)$.  

A number of upper bounds for $n(k,s,1)$ when Theorem~\ref{thmHJ} is restricted to $t=1$ have appeared in the literature.  These include $2k^{3}s$ in~\cite{BDE}, $3k^{2}s$ in~\cite{HLS} and $\frac{2k^{2}s}{\log k}$ in~\cite{FLM}.  Most recently, Frankl proved the following theorem.          

\begin{theorem}[Frankl~\cite{MR3033661}] \label{thm:FLM}
Let $\fis$ be a family of $k$-subsets of $[n]$ with property $P(s,1)$.  If $ n \geq (2s\!+\!1)k-s$, then $$ \left | \fis \right| \leq \binom{n}{k} - \binom{n-s}{k}$$ with equality if and only if $\fis$ consists of all $k$-subsets that intersect a given $s$-subset of $[n]$.
\end{theorem}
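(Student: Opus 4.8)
The plan is to recast Theorem~\ref{thm:FLM} in the language of matching numbers and attack it by \emph{left-compression} (shifting). Property $P(s,1)$ says precisely that no $s+1$ members of $\fis$ are pairwise disjoint, i.e.\ that the matching number $\nu(\fis)$ (the largest number of pairwise disjoint sets in $\fis$) satisfies $\nu(\fis)\le s$; writing $f(n,k,s)=\binom{n}{k}-\binom{n-s}{k}$, the goal is $|\fis|\le f(n,k,s)$. First I would recall that each shift $S_{ij}$ preserves $|\fis|$, preserves the condition $\nu(\fis)\le s$, and fixes the canonical extremal family $\{F:F\cap[s]\ne\emptyset\}$. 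Hence it suffices to prove the inequality for \emph{stable} (fully shifted) families, deferring the equality discussion to a reverse-compression argument at the end.

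The main step is a double induction on $k$ and $n$, peeling off the largest point. Given a stable $\fis\subseteq\binom{[n]}{k}$ with $\nu(\fis)\le s$ and $n\ge(2s+1)k-s$, set $\fis_0=\{F\in\fis:n\notin F\}\subseteq\binom{[n-1]}{k}$ and $\fis_1=\{F\setminus\{n\}:n\in F\in\fis\}\subseteq\binom{[n-1]}{k-1}$; both are stable and $|\fis|=|\fis_0|+|\fis_1|$. The key lemma is that stability together with $n\ge(s+1)k$ forces $\nu(\fis_1)\le s$: if $B_1,\dots,B_{s+1}$ were pairwise disjoint members of $\fis_1$, then each $B_i\cup\{n\}\in\fis$, and since $(n-1)-(s+1)(k-1)\ge s$ there are distinct points $j_1,\dots,j_s\in[n-1]$ outside $\bigcup_l B_l$; stability lets me replace $n$ by $j_i$ to get $B_i\cup\{j_i\}\in\fis$, so that $B_1\cup\{j_1\},\dots,B_s\cup\{j_s\},B_{s+1}\cup\{n\}$ are $s+1$ pairwise disjoint members of $\fis$, a contradiction. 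Granting $\nu(\fis_0)\le s$ (immediate) and $\nu(\fis_1)\le s$, the induction hypotheses give $|\fis_0|\le f(n-1,k,s)$ and $|\fis_1|\le f(n-1,k-1,s)$, and two applications of Pascal's rule yield $f(n-1,k,s)+f(n-1,k-1,s)=\binom{n}{k}-\binom{n-s}{k}=f(n,k,s)$, closing the step.

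The hard part will be the base cases. The cases $k=1$ and $s=1$ are trivial and Theorem~\ref{HM}'s parent, the \erd--Ko--Rado theorem, respectively, but the genuine obstacle is $n=(2s+1)k-s$: there the peeling drops the $k$-set part $\fis_0$ to ground set of size $(2s+1)k-s-1$, \emph{just below} the threshold, where the induction hypothesis is unavailable and the bound itself can fail (this is exactly why the hypothesis $n\ge(2s+1)k-s$ is sharp — it is the value at which $f(n,k,s)$ overtakes the rival ``matching'' family $\binom{[(s+1)k-1]}{k}$). Notice that the lemma above only consumed $n\ge(s+1)k$, so the full strength of the threshold must be spent precisely in this base case. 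To handle it I would not bound $|\fis_0|$ and $|\fis_1|$ separately but exploit the linkage that stability imposes between them — namely that inserting any sufficiently small new point into a member of $\fis_1$ produces a member of $\fis_0$, so the shadow-type relation between $\fis_0$ and $\fis_1$ constrains the two pieces jointly — and bound $|\fis_0|+|\fis_1|$ directly; this joint estimate at the threshold is the technical heart of the argument.

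Finally, for the equality statement I would trace equality back through the induction: equality propagates only if both $\fis_0$ and $\fis_1$ are extremal and their common linkage pins down a single $s$-set, forcing the stable $\fis$ to be $\{F:F\cap[s]\ne\emptyset\}$. One then runs the compressions in reverse, taking the usual care that shifting can collapse distinct isomorphism types, to conclude that the original family must itself be isomorphic to the family of all $k$-subsets meeting a fixed $s$-set.
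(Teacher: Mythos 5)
First, a point of comparison: the paper never proves Theorem~\ref{thm:FLM} at all --- it is imported from Frankl~\cite{MR3033661} and used as a black box in the proof of Theorem~\ref{thmHJmulti} --- so your proposal must be judged against Frankl's own argument, and against that standard it has a genuine gap. Your reduction framework is correct as far as it goes: shifting preserves the size, the property $P(s,1)$, and the canonical extremal family; your lemma that stability and $n\ge(s+1)k$ force $\nu(\fis_1)\le s$ is sound; and Pascal's rule does give $f(n-1,k,s)+f(n-1,k-1,s)=f(n,k,s)$. But all this machinery accomplishes is a reduction of the theorem to its single hardest instance, $n=(2s+1)k-s$ (for each $k$), and that is exactly where you stop. ``Bound $|\fis_0|+|\fis_1|$ jointly using the shadow-type linkage'' is not an argument; it is a restatement of the problem. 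Bear in mind that for large $n$ the inequality was already known by much easier arguments --- the paper itself cites the thresholds $2k^3s$~\cite{BDE}, $3k^2s$~\cite{HLS} and $2k^2s/\log k$~\cite{FLM} --- so the entire content of Frankl's theorem is the argument that works at and just above $(2s+1)k-s$. A proof that defers precisely this case to ``the technical heart'' has proved nothing beyond what was known before 2013.

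Two further problems. The heuristic you offer for the base case is false: $(2s+1)k-s$ is \emph{not} the value at which $f(n,k,s)$ overtakes the rival family $\binom{[(s+1)k-1]}{k}$. Already for $s=1$ the two candidate families have equal size at $n=2k$ and the \erd--Ko--Rado theorem gives the bound for all $n\ge 2k$, whereas $(2s+1)k-s=3k-1$; the bound does not fail just below Frankl's threshold --- it is conjectured (this is \erd's matching conjecture), and for $s=1$ known, to hold well below it. The threshold is an artifact of the method, and Frankl's method is not a peeling induction: after the same shifting reduction, he decomposes $\fis$ according to its intersection with $[s]$, into nested links $\fis(1)\supseteq\cdots\supseteq\fis(s)$ together with the subfamily avoiding $[s]$, observes that these $s+1$ families admit no rainbow matching (otherwise lifting the links back produces $s+1$ pairwise disjoint members of $\fis$), and proves a counting lemma for such cross-dependent families; the hypothesis $n\ge(2s+1)k-s$ is spent inside that lemma, not in an induction whose base case is postponed. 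Finally, the equality characterization --- which the statement as quoted includes --- is also left unproven: ``run the compressions in reverse, taking the usual care'' is precisely the step that requires an argument, and the paper's own Lemmas~\ref{downcomp maintains structure} and~\ref{structure if r=0} illustrate how much work undoing a single compression can take.
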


A slightly different problem is addressed by Frankl and F\"{u}redi in~\cite{FF}.  They prove the following theorem giving the maximum possible size of the union of two intersecting families of $k$-sets of $[n]$ where the two families are not required to be disjoint.  This is equivalent to the number of vertices in the largest induced bipartite subgraph of the Kneser graph $K(n,k)$.  Any set that is in both of the intersecting families can be arbitrarily assigned to one partition or the other and will be an isolated vertex in the induced subgraph.  A bipartite subgraph is triangle-free, that is, the vertices in the subgraph have property $P(2,1)$.  However, property $P(2,1)$ does not imply that the induced subgraph in $K(n,k)$ is bipartite.  

\begin{theorem}[Frankl and F\"{u}redi~\cite{FF}]\label{bipartite}
Let $\fis_{1}$ and $\fis_{2}$ be intersecting families of $k$-subsets of $[n]$.  If $n > \frac{1}{2}(3+\sqrt{5})k$, then $$\left|\fis_{1} \cup \fis_{2}\right| \leq \binom{n-1}{k-1} + \binom{n-2}{k-1}.$$ Equality holds if and only if there exists a $2$-set, $\{x,y\} \subset [n]$, such that $\fis_{1} \cup \fis_{2}$ consists of all the $k$-sets of $[n]$ that intersect $\{x,y\}$.
\end{theorem}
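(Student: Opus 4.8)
The plan is to separate the construction from the upper bound, and to drive the upper bound by a dichotomy on whether the two families are stars. For the construction, take $\fis_{1}$ to be all $k$-sets containing a fixed point $x$ and $\fis_{2}$ to be all $k$-sets containing a second point $y$; each is intersecting, and their union is exactly the collection of $k$-sets meeting $\{x,y\}$, of size $\binom{n-1}{k-1} + \binom{n-2}{k-1}$ (the sets through $x$, plus the sets through $y$ that avoid $x$). This shows the bound is attained and singles out the conjectured extremal configuration.

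For the upper bound I would first dispose of the degenerate case: if $\fis_{1} \cup \fis_{2}$ is itself intersecting, the \erd--Ko--Rado theorem gives $|\fis_{1} \cup \fis_{2}| \le \binom{n-1}{k-1}$, which is below the target. So I may assume there are disjoint sets $A, B \in \fis_{1} \cup \fis_{2}$; since each family is intersecting, $A$ and $B$ must lie in different families, say $A \in \fis_{1} \setminus \fis_{2}$ and $B \in \fis_{2} \setminus \fis_{1}$, and every member of $\fis_{1}$ meets $A$ while every member of $\fis_{2}$ meets $B$.

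The clean case is when at least one family, say $\fis_{1}$, is a star, i.e.\ some fixed point $x$ lies in every member of $\fis_{1}$. Then $\fis_{1}$ is contained in the $\binom{n-1}{k-1}$ sets through $x$, and every remaining member of the union lies in $\{F \in \fis_{2} : x \notin F\}$, which is an intersecting family of $k$-sets inside $[n] \setminus \{x\}$ and so has at most $\binom{n-2}{k-1}$ members by \erd--Ko--Rado (using $n - 1 \ge 2k$). Adding the two counts gives the desired bound. Tracing the equality conditions, together with the uniqueness clause of \erd--Ko--Rado on $[n]\setminus\{x\}$ (valid since $n > 2k+1$ in our range), forces $\fis_{1}$ to be the full star at $x$ and $\fis_{2}$ to contain the full star at some $y \neq x$, so that $\fis_{1} \cup \fis_{2}$ is precisely the sets meeting $\{x,y\}$.

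The hard case---and the step I expect to be the main obstacle---is when neither family is a star, i.e.\ both have covering number at least two. Here the Hilton--Milner theorem bounds each family by $\binom{n-1}{k-1} - \binom{n-k-1}{k-1} + 1$, but the naive estimate $|\fis_{1} \cup \fis_{2}| \le |\fis_{1}| + |\fis_{2}|$ is too lossy: near $n = 2k$ the two Hilton--Milner bounds already sum to roughly $2\binom{n-1}{k-1}$, which exceeds the target, so the union bound alone cannot reach the threshold $n > \tfrac{1}{2}(3+\sqrt{5})k$. To close this case I would exploit the fact that $\fis_{1} \cup \fis_{2}$ is not merely triangle-free in the Kneser graph but \emph{bipartite}, combined with finer bounds on intersecting families of prescribed covering number, and then optimize the resulting estimate over the possible covering numbers. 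The quadratic $n^{2} - 3nk + k^{2} > 0$ defining the golden-ratio threshold $n > \tfrac{1}{2}(3+\sqrt{5})k$ should emerge precisely as the point at which this refined bound drops below $\binom{n-1}{k-1} + \binom{n-2}{k-1}$, and the sharpness of the threshold reflects a competing non-star construction that overtakes the double star once $n$ falls below it.
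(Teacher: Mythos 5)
Your construction and your ``one family is a star'' case are correct and complete: if every member of $\fis_{1}$ contains $x$, the members of $\fis_{1}\cup\fis_{2}$ avoiding $x$ form an intersecting family of $k$-sets inside $[n]\setminus\{x\}$, and the \erd--Ko--Rado theorem on $n-1\geq 2k$ points (with its uniqueness clause, available since $n-1>2k$ in the stated range for $k\geq 2$) gives both the bound $\binom{n-1}{k-1}+\binom{n-2}{k-1}$ and the double-star structure at equality. But the case you label ``hard''---neither family is a star---is not proved; it is a research plan (``exploit bipartiteness, use finer bounds by covering number, optimize''), and that case is the entire content of the theorem. The threshold $n>\frac{1}{2}(3+\sqrt{5})k$ plays no role in your star case, so the quadratic $n^{2}-3nk+k^{2}>0$ can only be produced by the argument you have not given. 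Your own diagnosis shows the missing step is not routine: Hilton--Milner applied to each family plus the union bound would require $\binom{n-2}{k-2}+2\leq 2\binom{n-k-1}{k-1}$, which fails badly near the threshold (for $k=10$, $n=27$ one has $\binom{25}{8}\approx 1.1\times 10^{6}$ against $2\binom{16}{9}\approx 2.3\times 10^{4}$), and nothing in your sketch uses the bipartite structure quantitatively. As written, you have proved the theorem only under the extra hypothesis that one of the two families has a common element.

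For calibration: the paper itself offers no proof of this statement---it is quoted as a known theorem of Frankl and F\"uredi~\cite{FF} and used as a black box to derive the multiset version (via the homomorphism of Proposition~\ref{homomorphism}). In~\cite{FF} the non-star case is closed by a careful analysis of intersecting families in terms of their covering number, and it is exactly that analysis which generates the golden-ratio threshold; so the gap you have left open coincides with the real work of the theorem, not with a corner case.
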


\section{Intersecting multisets}\label{int}


For positive integers $k$ and $m$, let $M(m,k)$ be the graph whose vertices are the $k$-multisets  from $[m]$ and where two vertices are adjacent if and only if the corresponding $k$-multisets are disjoint (i.e.\ the intersection of the two multisets is the empty set).  The graph $M(m,k)$ has $\binom{m+k-1}{k}$ vertices and an independent set of vertices is an intersecting collection of $k$-multisets.   

The following proposition is proved in~\cite{MR2861399}.

\begin{prop}\label{homomorphism}
Let $m$ and $k$ be positive integers and set $n = m+k-1$.  Then there exists a function $f: {[n] \choose k} \rightarrow \multichoose {[m]}{k}$ with the following properties:
\begin{enumerate}
\item $f$ is a bijection.
\item For any $B \in \binom{[n]}{k}$, the support of $f(B)$ is equal to $B \cap [m]$.
\item $f$ is a graph homomorphism from $K(n,k)$ to $M(m,k)$.
\end{enumerate}
\end{prop}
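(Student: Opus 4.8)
The plan is to build $f$ so that it satisfies property~(2), and then to observe that properties~(1) and~(3) come along almost for free. Suppose for the moment that we have a bijection $f$ with $S_{f(B)} = B \cap [m]$ for every $B$. If $B$ and $B'$ are adjacent in $K(n,k)$, that is, $B \cap B' = \emptyset$, then $(B \cap [m]) \cap (B' \cap [m]) = \emptyset$, so by property~(2) the multisets $f(B)$ and $f(B')$ have disjoint supports and therefore satisfy $f(B) \cap f(B') = \emptyset$; that is, they are adjacent in $M(m,k)$. Hence property~(3) is an immediate consequence of property~(2), and the entire difficulty is concentrated in constructing a single support-preserving bijection.

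To construct $f$, I would partition both $\binom{[n]}{k}$ and $\multichoose{[m]}{k}$ according to the prospective common value of the support. For a fixed nonempty $S \subseteq [m]$ with $|S| = j$, write $\mathcal{B}_S = \{B \in \binom{[n]}{k} : B \cap [m] = S\}$ and $\mathcal{M}_S = \{G \in \multichoose{[m]}{k} : S_{G} = S\}$; these families partition the domain and the codomain respectively as $S$ ranges over the nonempty subsets of $[m]$. Since $[m]$ is an initial segment of $[n]$, every $B \in \mathcal{B}_S$ consists of $S$ together with a $(k-j)$-subset of the $(k-1)$-element interval $[m+1,n]$, so $|\mathcal{B}_S| = \binom{k-1}{k-j} = \binom{k-1}{j-1}$. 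On the other side, a multiset in $\mathcal{M}_S$ is determined by a composition of $k$ into $j$ positive parts, namely the multiplicities of the elements of $S$, so $|\mathcal{M}_S| = \binom{k-1}{j-1}$ as well. As the two fibers have equal size, I would fix, for each $S$, the standard correspondence between $(k-j)$-subsets of a $(k-1)$-element set and compositions of $k$ into $j$ positive parts, for instance the one sending (after shifting $[m+1,n]$ down to $[k-1]$) a subset to the composition whose sequence of partial sums is the complementary subset of $[k-1]$, and let $f$ act by this correspondence on $\mathcal{B}_S$. Taking the union over all $S$ defines $f$ on all of $\binom{[n]}{k}$.

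By construction $f$ is a bijection (property~(1)) and $S_{f(B)} = B \cap [m]$ for every $B$ (property~(2)), and property~(3) follows as above.

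I expect the main obstacle to be property~(2) itself, that is, encoding the multiplicities in a way that respects the support. The naive stars-and-bars bijection $\{a_1 < \dots < a_k\} \mapsto \{a_i - (i-1)\}$ does \emph{not} preserve the support; for example with $m=3$, $k=2$ it sends $\{1,2\}$ to the multiset $\{1,1\}$, whose support is $\{1\}$ rather than $\{1,2\}$. So one genuinely needs a fibered construction of the type above, and the care required is in checking that the per-fiber maps assemble into one well-defined global bijection. Once the support is pinned down correctly, the graph-homomorphism property requires no further work, since disjointness of two multisets is governed entirely by their supports.
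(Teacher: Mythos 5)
Your proposal is correct, and it is essentially the argument behind this proposition, which the paper itself does not reprove but quotes from~\cite{MR2861399}: one partitions both $\binom{[n]}{k}$ and $\multichoose{[m]}{k}$ by the common support $S = B \cap [m]$, matches the fibers (each of size $\binom{k-1}{|S|-1}$) via the standard correspondence between subsets of $[k-1]$ and compositions of $k$ into $|S|$ positive parts, and observes that the homomorphism property is automatic once supports are preserved, since disjointness of multisets depends only on their supports. Your remark that the naive stars-and-bars bijection fails property~(2) correctly identifies why the fibered construction is needed.
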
 

We will use this graph homomorphism to prove three new results for multisets based on the three theorems for set systems stated in Section~\ref{int sets}.

The next proposition is used in the proof of Theorem~\ref{Hilton Milner}.  The term \textsl{maximal} refers to a collection of multisets, $\fim \subset \multichoose{[m]}{k}$, where $\fim$ is intersecting but $\fim \cup A$ is not  intersecting for any multiset $A \in \binom{[m]}{k}\backslash\fim$.

\begin{prop}\label{support}
Let $\fim$ be a maximal intersecting $k$-multiset system from $[m]$ with $m \geq k+1$.  Then $\fim$ will contain a $k$-multiset with support of cardinality $k$.
\end{prop}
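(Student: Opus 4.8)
The plan is to exploit the fact that, for multisets, whether two multisets intersect depends only on their supports, and then to use the maximality of $\fim$ to force an honest $k$-set into the family. Recall that a $k$-multiset has support of cardinality $k$ exactly when every element of its support has multiplicity one, i.e. when it is an ordinary $k$-set; so the goal is to produce such a set inside $\fim$.

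First I would record the basic observation: for any two multisets $A$ and $B$, the intersection $A \cap B$ is empty if and only if $S_A \cap S_B = \emptyset$. This is immediate from the definition $\mathrm{m}(i,A\cap B)=\min\{\mathrm{m}(i,A),\mathrm{m}(i,B)\}$, since this minimum is positive for some $i$ precisely when some $i$ lies in both supports. Thus intersection of multisets reduces to intersection of their supports. Next, pick any $G \in \fim$; the family is nonempty because the empty family is not maximal (a single multiset is intersecting). Its support satisfies $|S_G| \leq k \leq m$, so I can enlarge $S_G$ to a set $T$ of exactly $k$ distinct elements of $[m]$ with $S_G \subseteq T$; here the hypothesis $m \geq k+1$ guarantees enough room, and when $|S_G|=k$ already we simply take $T=S_G$. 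Regard $T$ as a $k$-multiset in which every element has multiplicity one, so $T$ has support of cardinality $k$ and $S_T = T$.

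Then I would verify that $T$ intersects every member of $\fim$. Since $\fim$ is intersecting and $G \in \fim$, every $A \in \fim$ satisfies $S_A \cap S_G \neq \emptyset$; because $S_G \subseteq S_T$, this yields $S_A \cap S_T \neq \emptyset$, and hence $A \cap T \neq \emptyset$ by the opening observation. Therefore $\fim \cup \{T\}$ is intersecting, and the maximality of $\fim$ forces $T \in \fim$. As $T$ has support of cardinality $k$, this proves the claim.

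The argument is short, and there is no genuine obstacle to overcome; the only step that genuinely carries the proof is the opening equivalence between multiset intersection and support intersection, as this is exactly what legitimizes replacing an arbitrary member $G$ by a support-enlarging $k$-set $T$ without losing the intersecting property. The remaining care is purely bookkeeping: confirming $\fim \neq \emptyset$ and that the extension $T$ can be chosen inside $[m]$, both of which follow from $m \geq k+1$.
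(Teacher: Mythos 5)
Your proof is correct and follows essentially the same route as the paper: take a member of $\fim$, enlarge its support to a $k$-subset of $[m]$ (viewed as a $k$-multiset with all multiplicities one), observe that it still intersects every member of $\fim$, and invoke maximality to conclude it lies in $\fim$. The paper's proof is just a terser version of yours, leaving implicit the support-intersection equivalence and the nonemptiness check that you spell out.
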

\begin{proof}
For a given $A \in\fim$, let $B$ be any set of $(k - \left| S_{A} \right|)$ elements from $[m]\backslash S_{A}$ and let $C = B \cup S_{A}$.  Then $\left| S_{C}\right| = k$ and $C$ will intersect with all multisets in $\fim$.  Since $\fim$ is maximal, it follows that $C \in \fim$. \qed
\end{proof}

Define an intersecting $k$-multiset system, $\widetilde{\fim}$, as follows:
$$\widetilde{\fim} =\big \{A \in \multichoose{[m]}{k}: 1 \in A \mbox{ and } A \cap [2,k\!+\!1]\neq \emptyset\big\}\cup [2,k\!+\!1].$$  Then $$| \widetilde{\fim} | = \multichoose{m}{k-1} - \multichoose{m-k}{k-1} + 1 = \binom{m+k-2}{k-1} - \binom{m-2}{k-1} + 1.$$  

We now show that the size of the largest intersecting collection of $k$-multisets with no element common to all of the multisets is equal to the size of $\widetilde{\fim}$ when $m \geq k+1$.  (For $2 < m < k+1$, the largest intersecting families with no common element are the largest intersecting families as given by Theorem 1.3 in~\cite{MR2861399}.)

\begin{theorem}\label{Hilton Milner}
Let $k$ and $m$ be positive integers with $1 < k \leq m-1$.  Let $\fim$ be a collection of intersecting $k$-multisets of $[m]$ such that $$\bigcap_{A \in\fim}A = \emptyset.$$  Then \[|\fim| \leq \binom{m + k - 2}{k-1}-\binom{m-2}{k-1}+1 .\]  Moreover, if $3 < k < m-1$, equality holds if and only if $\fim$ is a collection of $k$-multisets from $[m]$ that  is isomorphic to $\widetilde{\fim}$.
\end{theorem}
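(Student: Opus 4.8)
The plan is to transport the Hilton--Milner theorem for sets (Theorem~\ref{HM}) to multisets through the homomorphism $f$ of Proposition~\ref{homomorphism}, in the same spirit as the \erd--Ko--Rado theorem was transported in Theorem~\ref{m>k}. Write $n=m+k-1$. The decisive observation is that, since $f$ is a graph homomorphism from $K(n,k)$ to $M(m,k)$, disjoint $k$-sets are sent to disjoint $k$-multisets; contrapositively, the $f$-preimage of any intersecting family of $k$-multisets is an intersecting family of $k$-subsets of $[n]$, and it has the same cardinality because $f$ is a bijection.

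For the bound I would first reduce to the case where $\fim$ is maximal. Since the common intersection of a family only shrinks as multisets are added, extending $\fim$ to a maximal intersecting family preserves the hypothesis $\bigcap_{A\in\fim}A=\emptyset$ and cannot decrease $|\fim|$; so we may assume $\fim$ is maximal. Put $\fis=f^{-1}(\fim)$, an intersecting family of $k$-subsets of $[n]$ with $|\fis|=|\fim|$. The key step is to check that $\fis$ also has empty common intersection. By Proposition~\ref{support}, maximality yields a multiset $A_0\in\fim$ with $|S_{A_0}|=k$; its preimage $B_0=f^{-1}(A_0)$ satisfies $B_0\cap[m]=S_{A_0}=A_0$ by property~(2) of $f$, and comparing sizes forces $B_0=A_0\subseteq[m]$. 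Now if some $j$ belonged to every set of $\fis$, then either $j\in[m]$, whence property~(2) would place $j$ in the support of every multiset of $\fim$ and hence in $\bigcap_{A\in\fim}A$, a contradiction; or $j\in[m+1,n]$, which is impossible since $j\notin B_0\subseteq[m]$. Thus $\bigcap_{F\in\fis}F=\emptyset$. As $1<k\le m-1$ is equivalent to $2\le k\le n/2$, Theorem~\ref{HM} applies to $\fis$ and gives $|\fim|=|\fis|\le\binom{n-1}{k-1}-\binom{n-k-1}{k-1}+1$, which is exactly the stated bound after substituting $n=m+k-1$.

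For the equality case when $3<k<m-1$, any extremal $\fim$ must already be maximal (otherwise it could be enlarged beyond the maximum), so the reduction above applies verbatim and produces an intersecting family $\fis$ with empty intersection meeting the Hilton--Milner bound. Because $3<k<n/2$ here, the uniqueness clause of Theorem~\ref{HM} forces $\fis$ to be isomorphic to $\widetilde{\fis}$: there is a distinguished element $x$ and a $k$-set $T$ with $x\notin T$ such that $\fis$ consists of all $k$-sets through $x$ meeting $T$, together with $T$ itself. It then remains to push this structure back through $f$ and identify $\fim=f(\fis)$ with a family isomorphic to $\widetilde{\fim}$.

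This last step is where I expect the real difficulty to lie. The isomorphism supplied by Theorem~\ref{HM} is a permutation of $[n]$, whereas isomorphism of multiset families is only a permutation of $[m]$, and $f$ does not intertwine these two symmetry groups, so one cannot merely quote the set-isomorphism. My plan is to pin down the location of the centre $x$ using the support property and the full-support set $B_0$ from the bound argument: since $B_0\subseteq[m]$ is the unique full-support member arising here, I would argue that $x$ may be taken in $[m]$ with $T$ playing the role of the exceptional set $[2,k\!+\!1]$, so that applying $f$ reproduces the defining description of $\widetilde{\fim}$. The delicate point is the configuration in which the set-theoretic centre $x$ falls in $[m+1,n]$: then every member of $\fis$ except $T=B_0$ contains $x$, and one must verify, using the explicit form of $f$ from~\cite{MR2861399}, that the resulting multiset family is still isomorphic to $\widetilde{\fim}$ or else that this case cannot attain the bound. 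Reconciling the combinatorics of $f$ with the two distinct notions of isomorphism is the crux of the argument; the bound itself, by contrast, falls out cleanly from the pullback.
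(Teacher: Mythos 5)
Your derivation of the bound is correct and is essentially the paper's own argument: reduce to a maximal family, invoke Proposition~\ref{support} to obtain a full-support member $B_0\subseteq[m]$, deduce that $\fis=f^{-1}(\fim)$ has empty common intersection, and apply Theorem~\ref{HM} with $n=m+k-1$. The genuine gap is in the equality case, which you leave as a plan rather than a proof: ``it then remains to push this structure back through $f$,'' and you propose to handle the configurations where the centre or the exceptional set meets $[m+1,n]$ either by analysing the explicit construction of $f$ from~\cite{MR2861399} or by a size argument. That step is the entire content of the uniqueness assertion, so as written the proposal does not establish it.

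The missing argument is in fact simpler and more robust than you anticipate: it uses only property (2) of Proposition~\ref{homomorphism} together with the hypothesis that $\fim$ is intersecting, never the explicit form of $f$; and the bad configurations are excluded not because they fail to attain the bound but because they are incompatible with $\fim$ being intersecting at all. Write $\mathcal B=\{B\in\binom{[n]}{k}: y\in B,\ B\cap X\neq\emptyset\}\cup\{X\}$ with $|X|=k$ and $y\notin X$. If $y\in[m+1,n]$, then since $X\subseteq[n]\setminus\{y\}$ and $|[m+1,n]\setminus\{y\}|=k-2$, the set $X$ contains two elements $x_1\neq x_2$ of $[m]$; the $k$-sets $B_i=\{x_i\}\cup[m+1,n]$ ($i=1,2$) both lie in $\mathcal B$, yet $f(B_1)$ and $f(B_2)$ have disjoint supports $\{x_1\}$ and $\{x_2\}$, so they are disjoint multisets of $\fim$ --- a contradiction. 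If instead $y\in[m]$ but $X\not\subseteq[m]$, then $B=\{y\}\cup[m+1,n]\in\mathcal B$, and $f(B)$ has support $\{y\}$ while $f(X)$ has support $X\cap[m]\not\ni y$, so $f(B)$ and $f(X)$ are disjoint --- again a contradiction. Hence $y\in[m]$ and $X\subseteq[m]$, and property (2) then identifies $\fim$ with $\{A\in\multichoose{[m]}{k}: y\in A,\ A\cap X\neq\emptyset\}\cup\{X\}$, which is isomorphic to $\widetilde{\fim}$. This is exactly how the paper closes the proof; your instinct that one cannot simply quote the set-level isomorphism was right, but the repair is this short support computation, not an appeal to the explicit bijection.
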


\begin{proof}
Let $\fim$ be an intersecting family of $k$-multisets from $[m]$ of maximum possible size such that $$\bigcap_{A \in \fim} A= \emptyset.$$  This implies that $$\bigcap_{A \in \fim} S_{A} = \emptyset.$$  Set $n=m+k-1$ and let $f: {[n] \choose k} \rightarrow \multichoose {[m]}{k}$ be a function with the properties given in Proposition~\ref{homomorphism}. Then $\mathcal B= f^{-1}(\fim)$ is an intersecting family of $k$-sets from $[n]$ with $B \cap [m] = S_{f(B)}$ for all $B \in \mathcal B$ and so
\begin{equation}\label{1}
\bigcap_{B \in \mathcal B} (B\; \cap \; [m])  =  \bigcap_{A \in \fim} S_{A}= \emptyset.
\end{equation}
From Proposition~\ref{support}, we know that there exists some $A_{k} \in\fim$ such that $\left| S_{A_{k}}\right| = k$.  Then $f^{-1}(A_{k})= A_{k} \in \mathcal B$.  Since $A_{k} \subset [m]$, it is clear that $A_{k} \cap [m\!+\!1, n] = \emptyset$.  It then follows that $$\bigcap_{B \in \mathcal B}  B \; \cap\;  [m\!+\!1,n] = \emptyset.$$  Combining this with Equation~\ref{1} gives $$\bigcap_{B \in \mathcal B} B   = \emptyset.$$

Since $\mathcal B$ is an intersecting $k$-set system with no element common to all the sets and $n \geq 2k$, the Hilton-Milner Theorem (Theorem~\ref{HM}) gives $$\left| \mathcal B \right| \leq \binom{n-1}{k-1} - \binom{n-k-1}{k-1} + 1 = \binom{m+k-2}{k-1} - \binom{m-2}{k-1} +1 \,.$$  Therefore $$\left| \fim \right| \leq  \binom{m+k-2}{k-1} - \binom{m-2}{k-1} +1 \,.$$  

To prove the uniqueness statement in the theorem, let $m\! >\! k\!+\!1$ and let $\fim$ be an intersecting multiset system of size  $\binom{m+k-2}{k-1} - \binom{m-2}{k-1} +1$.  With the function used above, the preimage of $\fim$ will be an independent set in $K(n,k)$ of size  $\binom{n-1}{k-1} - \binom{n-k-1}{k-1} + 1$.  By Theorem~\ref{HM}, if $k > 3$ then $\mathcal B= f^{-1}(\fim)$ is a set system isomorphic to  $$\widetilde{\fis} = \left\{F \in \binom{[n]}{k}: 1 \in F \mbox { and } F \cap [2,k\!+\!1]\neq \emptyset\right\} \cup [2,k\!+\!1].$$  Specifically, there will exist some set $X \subset [n]$ with $\left | X \right | =k$ and some $y \in [n]\backslash X$ such that  $\mathcal B = \{B \in \binom {[n]}{k} : y \in B \mbox{ and } B \cap X \neq \emptyset\} \cup X$.  Note that $X \subset [n]\backslash \{y\}$ and, since $n=m+k-1$, there must be at least one element from $[m]$ in X.  

Suppose that $y \notin [m]$.  Then $X$ must contain at least two elements from $[m]$.  Call these elements $x_{1}$ and $x_{2}$.  Then the sets $B_{1} = \{x_{1}, m\!+\!1,\dots,n\}$ and $B_{2}=\{x_{2}, m\!+\!1,\dots,n\}$ are in $\mathcal B$.  But $f(B_{i}) \cap [m] = x_{i}$, so $f(B_{1}) \cap f(B_{2}) = \emptyset$ which contradicts our assumption that $\fim$ is intersecting.  Therefore, $y$ must be an element of $[m]$.

Now suppose that $X \not \subset [m]$. Then there exists some $x \in X$ such that $x \in [m\!+\!1,n]$ and the set $B = \{y,m\!+\!1,\dots,n\}$ will be in $\mathcal B$ since $y \in B$ and $B \cap X \neq \emptyset$.  But $B \cap [m]=\{y\}$ so $f(B) \cap f(X) = \emptyset$ which again contradicts the assumption that $\fim$ is intersecting. 

Thus $X \subset [m]$ and $y \in [m]$ and it follows from the properties of $f$ that $$\fim = \{A \in \multichoose{[m]}{k}: y \in A \mbox{ and } A \cap X \neq \emptyset\} \cup X$$ (i.e. $\fim$ will be isomorphic to $\widetilde{\fim}$). \qed 
\end{proof}

If $k\!=\!3$ or $m\!=\!k\!+\!1$, then families attaining the maximum size are not limited to those isomorphic to $\widetilde{\fim}$.  For example, $\sfm{1}$ will attain the maximum size even though it is not isomorphic to $\widetilde{\fim}$.  Specifically, if $k=3$, $$| \widetilde{\fim} | = \left| \sfm{1}\right| = 3m-2\,,$$ and if $m=k+1$, $$| \widetilde{\fim}| = \left| \sfm{1}\right| = \binom{2k-1}{k-1}.$$

  \begin{theorem}\label{thmHJmulti}
Let $\fim$ be a family of $k$-multisets of $[m]$ and suppose that $\fim$ satisfies $P(s,1)$, that is, no more than $s$ of the multisets are pairwise disjoint.  If $m > (2k\!-\!1)s$, then  
$$ \left| \fim \right| \leq \multichoose{m}{k}-\multichoose{m-s}{k}=\binom{m\!+\!k\!-\!1}{k} - \binom{m\!-\!s\!+\!k\!-\!1}{k}.$$
Equality holds if and only if $\fim$ is the family of all $k$-multisets that intersect with a given set of size $s$.

\end{theorem}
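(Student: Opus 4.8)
The plan is to mirror the proof of Theorem~\ref{Hilton Milner}: transport the problem to set systems through the graph homomorphism of Proposition~\ref{homomorphism}, apply Frankl's theorem (Theorem~\ref{thm:FLM}), and then transport the extremal structure back. Set $n = m+k-1$, let $f : \binom{[n]}{k} \to \multichoose{[m]}{k}$ be the bijective homomorphism of Proposition~\ref{homomorphism}, and put $\mathcal B = f^{-1}(\fim)$. First I would check that $\mathcal B$ inherits property $P(s,1)$: if $B_{0},\dots,B_{s}$ were pairwise disjoint members of $\mathcal B$, then, since $f$ is a graph homomorphism from $K(n,k)$ to $M(m,k)$, their distinct images $f(B_{0}),\dots,f(B_{s})$ would be pairwise disjoint $k$-multisets in $\fim$, contradicting $P(s,1)$. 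The hypothesis $m > (2k-1)s$ gives $m \geq (2k-1)s+1$, hence $n = m+k-1 \geq (2s+1)k-s$, so Theorem~\ref{thm:FLM} applies and yields $|\fim| = |\mathcal B| \leq \binom{n}{k} - \binom{n-s}{k}$. Rewriting $\binom{n}{k} = \multichoose{m}{k}$ and $\binom{n-s}{k} = \multichoose{m-s}{k}$ gives the stated bound.

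For the characterization of equality, Theorem~\ref{thm:FLM} tells us that $|\mathcal B|$ meets the bound if and only if $\mathcal B = \{B \in \binom{[n]}{k} : B \cap S \neq \emptyset\}$ for some $s$-subset $S$ of $[n]$. The crux is then to prove $S \subseteq [m]$. Once this is known, property (2) of $f$ (that $S_{f(B)} = B \cap [m]$) identifies $\fim$ with $\{A \in \multichoose{[m]}{k} : A \cap S \neq \emptyset\}$: indeed $A \in \fim$ iff $f^{-1}(A) \cap S \neq \emptyset$, and since $S \subseteq [m]$ this is iff $S_{A} \cap S \neq \emptyset$, i.e.\ iff $A$ meets $S$. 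Thus $\fim$ is exactly the family of all $k$-multisets meeting the $s$-set $S$.

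To prove $S \subseteq [m]$, I expect the key gadget to be the constant multisets: for $x \in [m]$, let $A_{x}$ denote the $k$-multiset consisting of $k$ copies of $x$. By property (2) its unique preimage is $f^{-1}(A_{x}) = \{x\} \cup [m\!+\!1,n]$, since $[m\!+\!1,n]$ has exactly $k-1$ elements and must fill out a $k$-set whose intersection with $[m]$ is $\{x\}$. Now suppose for contradiction that $S$ contains some element of $[m\!+\!1,n]$. Then $f^{-1}(A_{x}) \cap S \neq \emptyset$ for every $x \in [m]$, so every constant multiset $A_{1},\dots,A_{m}$ lies in $\fim$; these are pairwise disjoint, and because $m > (2k-1)s \geq s$ forces $m \geq s+1$, they give $s+1$ pairwise disjoint multisets in $\fim$, violating $P(s,1)$. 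Hence $S \subseteq [m]$.

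The main obstacle is precisely this last step, namely ruling out the ``extra'' coordinates $[m\!+\!1,n]$ that have no multiset counterpart; the constant-multiset gadget, already used in the proof of Theorem~\ref{Hilton Milner}, resolves it cleanly. Finally, the converse direction is a direct count: for an $s$-set $S \subseteq [m]$, the multisets disjoint from $S$ are exactly those supported on $[m]\setminus S$, of which there are $\multichoose{m-s}{k}$, so the family of all $k$-multisets meeting $S$ has size $\multichoose{m}{k} - \multichoose{m-s}{k}$ and satisfies $P(s,1)$ since pairwise disjoint members must use distinct elements of $S$.
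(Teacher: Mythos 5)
Your proof is correct and follows essentially the same route as the paper's own proof: pull $\fim$ back through the bijective homomorphism of Proposition~\ref{homomorphism}, verify property $P(s,1)$ transfers, apply Theorem~\ref{thm:FLM} using $m > (2k\!-\!1)s \Rightarrow n \geq (2s\!+\!1)k-s$, and rule out an extremal $s$-set meeting $[m\!+\!1,n]$ via the $m \geq s+1$ pairwise disjoint constant multisets. The only cosmetic difference is that you spell out the preimage of a constant multiset and the converse count explicitly, which the paper handles implicitly.
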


\begin{proof}   Let $\fim$ be a family of $k$-multisets from $[m]$ of maximum possible size having property $P(s,1)$ and assume that $m > (2k\!-\!1)s$.  Set $n = m+k-1$ and let $f: {[n] \choose k} \rightarrow \multichoose {[m]}{k}$ be a function with the properties given in Proposition~\ref{homomorphism}.  Then $\fis=f^{-1}(\fim)$ is a family of $k$-sets from $[n]$ with property $P(s,1)$ and, since $m>(2k\!-\!1)s$ implies that $n \geq (2s\!+\!1)k-s$, Theorem~\ref{thm:FLM} gives $$\left| \fis \right| \leq \binom{n}{k}-\binom{n-s}{k}.$$  Thus $$ \left| \fim \right| \leq \binom{n}{k}-\binom{n-s}{k}=\multichoose{m}{k}-\multichoose{m-s}{k}.$$  Since $m > (2k\!-\!1)s \geq s$, a family consisting of all $k$-multisets that intersect with $[s]$ will attain this size and will have property $P(s,1)$.  Since $\fim$ was assumed to be as large as possible, it follows that $$ \left| \fim \right| =\multichoose{m}{k}-\multichoose{m-s}{k}.$$

From Theorem~\ref{thm:FLM}, we know that $f^{-1}(\fim)$ is a family consisting of all $k$-subsets of $[n]$ generated by some $s$ disjoint $t$-subsets of  $[n]$.  Since $t=1$, this is equivalent to the family consisting of all $k$-subsets that intersect with a given $s$-set from $[n]$.  If this $s$-set, $S$, is a subset of $[m]$, then it follows from the definition of $f$ that $\fim$ consists of all $k$-multisets that intersect $S$.  If $S$ is not a subset of $[m]$, then there will be some $y \in S$ such that $y\in [m\!+\!1, n]$.  Thus for all $x \in [m]$, the set $\{x, m\!+\!1, \dots, n\}$ will be in $f^{-1}(\fim)$ and the multisets consisting of $k$ copies of $x$ will be in $\fim$.  Since $m \geq s+1$, this contradicts our assumption that $\fim$ has property $P(s,1)$.  Therefore, if $m > (2k\!-\!1)s$, and $\fim$ is a family of $k$-multisets of $[m]$ with  property $P(s,1)$ having the maximum possible size, then $\fim$ consists of all $k$-multisets that intersect with a given set of size $s$. \qed 
\end{proof}

\begin{theorem}\label{bipartitemulti}
Let $\fim_{1}$ and $\fim_{2}$ be intersecting families of $k$-multisets with elements from $[m]$.  If $m > \frac{1}{2}(1+\sqrt{5})k + 1$, then $$\left| \fim_{1} \cup \fim_{2}\right| \leq \multichoose{m}{k-1} + \multichoose{m-1}{k-1} = \binom{m+k-2}{k-1} + \binom{m+k-3}{k-1}.$$  Equality holds if and only if there exists a $2$-set, $\{x,y\} \subset [m]$, such that $\fim_{1} \cup \fim_{2}$ consists of all $k$-multisets that intersect $\{x,y\}$.
\end{theorem}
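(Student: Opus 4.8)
The plan is to mimic exactly the pattern used in the proofs of Theorems~\ref{Hilton Milner} and~\ref{thmHJmulti}: transport the multiset problem to a set-system problem on $[n]$ with $n=m+k-1$ via the homomorphism $f$ of Proposition~\ref{homomorphism}, apply the known set-system result (here Theorem~\ref{bipartite}), and then pull the answer back. First I would set $n=m+k-1$ and take $\fis_1=f^{-1}(\fim_1)$ and $\fis_2=f^{-1}(\fim_2)$. Since $f$ is a bijection and a graph homomorphism from $K(n,k)$ to $M(m,k)$, disjoint multisets have disjoint preimages, so each $\fis_i$ is an intersecting family of $k$-subsets of $[n]$, and $|\fis_1\cup\fis_2|=|\fim_1\cup\fim_2|$ because $f$ is a bijection. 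The numerical hypothesis transfers cleanly: $m>\tfrac12(1+\sqrt5)k+1$ is exactly $n=m+k-1>\tfrac12(3+\sqrt5)k$, which is the hypothesis of Theorem~\ref{bipartite}. Applying that theorem gives
\[
|\fim_1\cup\fim_2|=|\fis_1\cup\fis_2|\leq\binom{n-1}{k-1}+\binom{n-2}{k-1}=\binom{m+k-2}{k-1}+\binom{m+k-3}{k-1},
\]
and one checks these equal $\multichoose{m}{k-1}+\multichoose{m-1}{k-1}$ using $\multichoose{a}{b}=\binom{a+b-1}{b}$. For attainability I would verify that the family of all $k$-multisets meeting a fixed $2$-set $\{x,y\}\subset[m]$ has this size and decomposes into two intersecting families (those containing $x$, and those containing $y$ but not $x$), so the bound is sharp.

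The uniqueness direction is where the real work lies, and it follows the same ``clearing out phantom elements'' argument as in the earlier two proofs. Suppose equality holds. By Theorem~\ref{bipartite} there is a $2$-set $\{a,b\}\subset[n]$ such that $\fis_1\cup\fis_2$ consists of all $k$-subsets of $[n]$ meeting $\{a,b\}$. The task is to show that both $a$ and $b$ can be taken in $[m]$, whence by the support property of $f$ the family $\fim_1\cup\fim_2$ is exactly all $k$-multisets meeting $\{a,b\}\subset[m]$. The obstacle is to rule out the case where one or both of $a,b$ lie in the phantom part $[m+1,n]$. The main obstacle: if, say, $b\in[m+1,n]$, then every $k$-set of the form $\{x,m+1,\dots,n\}$ with $x\in[m]$ meets $\{a,b\}$ (it contains $b$) and so lies in the preimage family, forcing the singleton-support multiset $k\cdot x$ into $\fim_1\cup\fim_2$ for every $x\in[m]$. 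I would then derive a contradiction with intersectingness: the multisets $k\cdot x$ and $k\cdot x'$ for $x\neq x'$ are disjoint, yet each must lie in one of the two intersecting families $\fim_1,\fim_2$; since there are $m\geq 3$ distinct such singleton multisets (using $m>\tfrac12(1+\sqrt5)k+1>3$), a pigeonhole-style argument puts two disjoint ones into the same intersecting family, a contradiction. This handles the subcase where only one phantom element is present.

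The delicate remaining subcase is when \emph{both} $a,b\in[m+1,n]$; then the family meeting $\{a,b\}$ contains $\{x,m+1,\dots,n\}$ for all $x\in[m]$ as above, and the same disjoint-singleton contradiction applies immediately, so this case is actually easier once the singleton-multiset obstruction is in place. Thus I expect the heart of the argument to be a careful case analysis on how many of $a,b$ lie in $[m+1,n]$, in each case producing a collection of pairwise-disjoint singleton-support multisets that cannot be two-colored by intersecting families. After ruling out phantom elements, the support property $S_{f(B)}=B\cap[m]$ immediately identifies $\fim_1\cup\fim_2$ as all $k$-multisets meeting the $2$-set $\{a,b\}\subset[m]$, completing the uniqueness claim and hence the theorem.
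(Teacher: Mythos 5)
Your proposal is correct and takes essentially the same route as the paper's own proof: pull both families back through the homomorphism of Proposition~\ref{homomorphism}, apply Theorem~\ref{bipartite} (the hypothesis transfers exactly as you say), and for uniqueness rule out a ``phantom'' element of $[m+1,n]$ in the fixing $2$-set by producing pairwise disjoint singleton-support multisets $\{x,\dots,x\}$ that cannot be distributed between two intersecting families --- the paper does precisely this with the three multisets of $k$ copies of $1$, $2$, $3$, while you use all $m$ of them and pigeonhole. One small wording slip: the fact that each $\fis_i=f^{-1}(\fim_i)$ is intersecting follows from the homomorphism property that disjoint \emph{sets} map to disjoint \emph{multisets} (so intersecting multiset families have intersecting preimages), not from ``disjoint multisets have disjoint preimages'' as you phrased it; this does not affect the argument.
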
 

\begin{proof}
Let $\fim_{1}$ and $\fim_{2}$ be intersecting families of $k$-multisets from $[m]$.  Set $n=m+k-1$ and let $f: {[n] \choose k} \rightarrow \multichoose {[m]}{k}$ be a function with the properties given in Proposition~\ref{homomorphism}.  Then  $\fis_{1}=f^{-1}(\fim_{1})$ and $\fis_{2}=f^{-1}(\fim_{2})$ are intersecting families of $k$-sets of $[n]$.  Since $f$ is bijective, if there is any multiset $G$ such that $G \in \fim_{1} \cap \fim_{2}$, then $f^{-1}(G) \in \fis_{1} \cap \fis_{2}$.  Thus $\left| \fim_{1} \cup \fim_{2} \right| = \left| \fis_{1} \cup \fis_{2}\right|$.  Since $m > \frac{1}{2}(1+\sqrt{5})k + 1$ implies $n > \frac{1}{2}(3+\sqrt{5})k$, it follows from Theorem~\ref{bipartite} that $$\left| \fim_{1}\cup \fim_{2}\right| \leq \binom{n-1}{k-1} + \binom{n-2}{k-1} = \multichoose{m}{k-1} +\multichoose{m-1}{k-1}.$$
If $\fim_{1}$ consists of all $k$-multisets containing $1$ and $\fim_{2}$ consists of all $k$-multisets containing $2$ but not $1$, then $$\left| \fim_{1}\cup \fim_{2}\right | = \multichoose{m}{k-1} +\multichoose{m-1}{k-1},$$ the upper bound given in the theorem.  Note that this is equal to the size of the family consisting of all $k$-multisets that intersect with $\{x,y\}$ and that $$  \multichoose{m}{k-1} +\multichoose{m-1}{k-1} = \multichoose{m}{k}-\multichoose{m-2}{k}.$$

To prove the uniqueness portion of the theorem, assume that $\fim_{1} \cup \fim_{2}$ is as large as possible.  Then $$\left|\fis_{1} \cup \fis_{2} \right| =\binom{n-1}{k-1}+\binom{n-2}{k-1},$$ and by Theorem~\ref{bipartite} there exists a set $\{x,y\} \subset [n]$ such that $F\cap \{x,y\} \neq \emptyset$ for all $F \in \fis_{1} \cup \fis_{2}$.  Since the case of $k=1$ is trivial, we will assume that $k \geq 2$.  This implies that $m \geq 3$.  If $\{x,y\} \subset [m]$, then it follows from the definition of $f$ that $\fim_{1} \cup \fim_{2}$ consists of all $k$-multisets that intersect $\{x,y\}$ and we are done, so assume without loss of generality that $y \notin [m]$.  Then $$\left\{\{1,m\!+\!1,\dots,n\},\{2,m\!+\!1,\dots,n\},\{3,m\!+\!1,\dots,n\}\right\} \subset \fis_{1} \cup \fis_{2}$$ and therefore the multisets consisting of $k$ copies of $1$, $2$ and $3$ will be in $\fim_{1} \cup \fim_{2}$.  This contradicts our assumption that $\fim_{1}$ and $\fim_{2}$ are intersecting families.  Thus if $\fim_{1}$ and $\fim_{2}$ are intersecting families of $k$-multisets from $[m]$ such that $\fim_{1} \cup \fim_{2}$ is as large as possible, then $\{x,y\} \subset [m]$ and $\fim_{1} \cup \fim_{2}$ consists of all $k$-multisets that intersect $\{x,y\}$. \qed
\end{proof}

\section{$t$-intersecting multisets}\label{t-int}


The main result in this section is the following theorem which is a restatement of Theorem~\ref{furedi} with the addition of the structure of the $t$-intersecting multiset families that attain the maximum size.  Recall that $\sfm{r}$ is defined as $$\sfm{r}=\left\{G \in \multichoose{[m]}{k}: \,\left| G \cap [t+2r] \right| \geq t+r\right\}.$$

\begin{theorem}\label{our t result}
Let $1 \leq t \leq k$ and let $r$ be a non-negative integer such that $r \leq k-t$. Let $\fim$ be a $t$-intersecting collection of $k$-multisets of $[m]$ with $m \geq 2k-t$.
\begin{enumerate}
\item If \[(k-t+1)\left(2 + \frac{t-1}{r+1}\right) <m\!+\!k\!-\!1 < (k-t+1)\left(2 + \frac{t-1}{r}\right),\] then $\left| \fim \right| \leq \left | \sfm{r}\right|$.   (By convention, $\frac{t-1}{r} = \infty$ for $r=0$.)

 If $r > 0$, equality holds if and only if $\fim$ is isomorphic to $\sfm{r}$.  If $r=0$, equality holds if and only if $\fim$ consists of all $k$-multisets containing a fixed $t$-multiset.
 
\item If $$n=(k-t+1)\left(2 + \frac{t-1}{r+1}\right),$$ then $\left| \fim \right| \leq \left | \sfm{r}\right| = \left| \sfm{r+1} \right|$. 

If $ r>0$, equality holds if and only if $\fim$ is isomorphic to either $\sfm{r}$ or $\sfm{r+1}$.  If $r=0$, equality holds if and only if $\fim$ is isomorphic to $\sfm{1}$ or it consists of all $k$-multisets containing a fixed $t$-multiset.
\end{enumerate}  
\end{theorem}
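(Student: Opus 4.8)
The plan is to take the size bounds directly from Theorem~\ref{furedi} and to concentrate on the structure statements, which I would reduce to the Ahlswede--Khachatrian classification (Theorem~\ref{AK}) using the homomorphism $f$ of Proposition~\ref{homomorphism} together with the down-compression of~\cite{FGV}. The first step is to record that $f$ carries the set-extremal family onto the multiset-extremal one: writing $n=m+k-1$, property~(2) of $f$ gives $S_{f(B)}=B\cap[m]$, and because $r\le k-t$ forces $t+2r\le 2k-t\le m$ we have $[t+2r]\subseteq[m]$, so $|S_{f(B)}\cap[t+2r]|=|B\cap[t+2r]|$ for every $B\in\binom{[n]}{k}$. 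Hence $B\in\sfs{r}$ if and only if $f(B)\in\sfm{r}$, so $f$ restricts to a bijection $\sfs{r}\to\sfm{r}$ and $|\sfm{r}|=|\sfs{r}|=AK(n,k,t)$. With this identity in hand, the inequality $|\fim|\le|\sfm{r}|$ in both parts is exactly Theorem~\ref{furedi}, and the equality $|\sfm{r}|=|\sfm{r+1}|$ in part~(2) is the image under $f$ of the corresponding statement in Theorem~\ref{AK}.

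For the uniqueness claims I would assume $\fim$ is $t$-intersecting with $|\fim|=|\sfm{r}|$ and imitate the shifting proof of the set case. Pulling $\fim$ back directly is not available here: since $S_{f(B)}$ only records $B\cap[m]$, two sets whose common elements lie in $[m\!+\!1,n]$ can satisfy $|B\cap B'|\ge t$ while $f(B)$ and $f(B')$ share fewer than $t$ elements, so $f^{-1}$ need not send a $t$-intersecting multiset family to a $t$-intersecting set system. This is exactly the gap that down-compression is meant to close. I would apply the down-compression operation of~\cite{FGV} to replace $\fim$ by a down-compressed $t$-intersecting family $\fim'$ of the same size, verify, in the spirit of~\cite{FGV}, that for a down-compressed family the preimage $\mathcal B'=f^{-1}(\fim')$ is again $t$-intersecting as a $k$-set system, and note that $m\ge 2k-t$ puts $n=m+k-1$ in the regime covered by Theorem~\ref{AK}. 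Then $\mathcal B'$ is a $t$-intersecting system of size $AK(n,k,t)$; since it inherits the shiftedness of $\fim'$, the classification forces $\mathcal B'=\sfs{r}$ exactly under the strict inequalities of part~(1), and $\mathcal B'\in\{\sfs{r},\sfs{r+1}\}$ at the boundary of part~(2). Applying $f(\sfs{r})=\sfm{r}$ gives $\fim'=\sfm{r}$ (respectively $\fim'\in\{\sfm{r},\sfm{r+1}\}$).

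The main obstacle is the passage from the compressed family back to $\fim$ itself, since down-compression is neither injective nor compatible with relabellings of $[m]$. This is the analogue of the step in the proofs of Theorems~\ref{Hilton Milner}--\ref{bipartitemulti} where one must check that the distinguished element or set produced by the set theorem actually lies inside $[m]$; here it becomes a check that no compression step other than a relabelling can be undone without destroying either the size or the $t$-intersection at the extremal value. For $r>0$ I expect this to show that the only maximum families compressing to $\sfm{r}$ are the isomorphic copies of $\sfm{r}$ (and, at the part~(2) boundary, of $\sfm{r}$ or $\sfm{r+1}$). The case $r=0$ must be treated separately and explains the different phrasing in the statement: here $\sfm{0}$ is the family of all $G$ with $|S_G\cap[t]|\ge t$, that is, $[t]\subseteq S_G$, but a relabelling of $[m]$ cannot turn ``containing the $t$-set $\{1,\dots,t\}$'' into ``containing the $t$-multiset $\{1,1,\dots\}$'', so the extremal families form the larger collection of all $k$-multisets containing a fixed $t$-multiset; I would finish by checking directly that each such family is $t$-intersecting of size $\multichoose{m}{k-t}=|\sfs{0}|$.
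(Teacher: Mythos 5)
Your overall route---down-compress, pull back through the bijection $f$, apply Theorem~\ref{AK}, then undo the compression---is the same as the paper's, and reducing the size bound to Theorem~\ref{furedi} together with the identity $\left|\sfm{r}\right|=\left|\sfs{r}\right|$ is fine. But the two steps that carry essentially all of the difficulty of the uniqueness claims are not actually proved in your proposal. First, your assertion that the preimage $f^{-1}(\fim\pr)$ of the compressed family ``inherits the shiftedness of $\fim\pr$'' and is therefore \emph{equal} to $\sfs{r}$ is unjustified: the down-compression of~\cite{FGV} is not set-theoretic shifting, and nothing about $f$ converts it into shiftedness of the preimage. Theorem~\ref{AK} gives only that $f^{-1}(\fim\pr)$ is isomorphic to $\sfs{r}$, i.e.\ that it consists of all $k$-sets meeting some $(t\!+\!2r)$-set $X\subseteq[n]$ in at least $t+r$ elements, and one must still rule out $X\not\subseteq[m]$. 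The paper does this in Lemma~\ref{structure}, not via shiftedness but via the homomorphism of Proposition~\ref{t-int hom}: if there were $x\in X$ with $x\notin[m]$, one could pick $A_{1},A_{2}$ in the preimage with $x\in A_{1}\cap A_{2}$, $\left|A_{1}\cap X\right|=\left|A_{2}\cap X\right|=t+r$ and $\left|A_{1}\cap A_{2}\right|=t$, so that $\left|f(A_{1})\cap f(A_{2})\cap[m]\right|<t$, contradicting the fact that the compressed family has $t$-intersecting supports.

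Second, the decompression step is only announced in your proposal (``I expect this to show\dots''), yet it is where most of the paper's work lies. For $r>0$ this is Lemma~\ref{downcomp maintains structure}: assuming $\mathcal S_{(i,s)(j)}(\fim)=\sfm{r}$, one shows every $B\in\sfm{r}$ already lies in $\fim$; in the key case $j\in[t\!+\!2r]$ with $\left|B\cap[t\!+\!2r]\right|=t+r$, one constructs an auxiliary $B\pr\in\sfm{r}$ with $\mathrm{m}(i,B\pr)\neq s-1$ (hence $B\pr\in\fim$) satisfying $\left|B\pr\cap A\right|=t-1$ for the putative preimage $A$ of $B$, which forces $A\notin\fim$ and hence $B\in\fim$. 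For $r=0$ the situation is worse than your sketch suggests: a family whose compression is $\sfm{0}$ need not be isomorphic to $\sfm{0}$; it can instead be the family of all $k$-multisets containing a fixed $t$-multiset with repeated entries, which has the same size but is not isomorphic to $\sfm{0}$---this is exactly why the $r=0$ clause of the theorem is phrased differently, a point you correctly sense but attribute only to relabellings. Proving that these are the only possibilities is Lemma~\ref{structure if r=0}, whose hardest case ($j\in X$ and $\mathrm{m}(i,X)=s-1$) requires a connectivity argument on the auxiliary graph whose vertices are the multisets $B$ with $\mathrm{m}(i,B)=s-1$ and whose edges are exact $t$-intersections, in order to conclude that either all such multisets survived the compression or all were shifted. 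Your closing verification that families fixed by a $t$-multiset attain the bound $\multichoose{m}{k-t}$ is the easy direction; without these two lemmas the uniqueness statements in both parts remain unproved.
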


Our proof uses a graph homomorphism between two graphs having the same vertex sets as $K(n,k)$ and $M(m,k)$  but where independent sets of vertices are $t$-intersecting families of $k$-sets and $k$-multisets respectively.  We begin by defining these graphs and showing that a function with the properties given in Proposition~\ref{homomorphism} exists.

 Let $K(n,k,t)$ be the graph whose vertices are the $k$-subsets of $[n]$ and where two vertices, $A$ and $B$, are adjacent if and only if $\left| A\cap B\right| <t$.  An independent set of vertices in $K(n,k,t)$ will be a $t$-intersecting family of $k$-subsets of $[n]$. 

Let $M\pr(m,k,t)$ be the graph whose vertices are the $k$-multisets of $[m]$ and where two vertices, $A$ and $B$, are adjacent if and only if $\left| A \cap B \cap [m] \right| < t$.  In other words, two vertices are adjacent if and only if their supports have fewer than $t$ elements in common.  An independent set will be a $t$-intersecting family of $k$-multisets of $[m]$, although not all $t$-intersecting families will be independent sets in $M\pr(m,k,t)$.

\begin{prop}\label{t-int hom}
Let $m$ and $k$ be positive integers and set $n=m+k-1$.  Then there exisits a function $f:{[n] \choose k} \rightarrow \multichoose {[m]}{k}$ with the following properties: 
\begin{enumerate}
\item $f$ is a bijection.
\item For any $A \in \binom{[n]}{k}$, the support of $f(A)$ is equal to $A \cap [m]$.
\item $f$ is a graph homomorphism from $K(n,k,t)$ to $M\pr(m,k,t)$.
\end{enumerate}
\end{prop}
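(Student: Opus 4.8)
The plan is to reuse the very same bijection $f$ constructed in Proposition~\ref{homomorphism}, since Proposition~\ref{t-int hom} asks for a function with properties 1 and 2 that are \emph{identical} to those in Proposition~\ref{homomorphism}; only the homomorphism property (item 3) changes, from $K(n,k)\to M(m,k)$ to $K(n,k,t)\to M\pr(m,k,t)$. So I would not build a new map at all. First I would invoke Proposition~\ref{homomorphism} to obtain $f:\binom{[n]}{k}\to\multichoose{[m]}{k}$, which immediately gives properties 1 and 2. The only thing left to verify is that this same $f$ respects adjacency in the new graphs.

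The key step is therefore to check the homomorphism condition for the $t$-threshold graphs, and here property 2 does all the work. Recall that in $K(n,k,t)$ two sets $A,B$ are adjacent iff $\lvert A\cap B\rvert < t$, while in $M\pr(m,k,t)$ two multisets are adjacent iff their supports meet in fewer than $t$ elements, i.e.\ $\lvert f(A)\cap f(B)\cap[m]\rvert < t$. By property 2 the support of $f(A)$ is $A\cap[m]$ and the support of $f(B)$ is $B\cap[m]$, so
\[
S_{f(A)}\cap S_{f(B)} = (A\cap[m])\cap(B\cap[m]) = (A\cap B)\cap[m].
\]
Thus $\lvert f(A)\cap f(B)\cap[m]\rvert = \lvert (A\cap B)\cap[m]\rvert$. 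To show $f$ is a homomorphism I must show that if $A$ and $B$ are adjacent in $K(n,k,t)$ then $f(A)$ and $f(B)$ are adjacent in $M\pr(m,k,t)$; equivalently that $\lvert A\cap B\rvert < t$ forces $\lvert (A\cap B)\cap[m]\rvert < t$. This is immediate since $(A\cap B)\cap[m]\subseteq A\cap B$, giving $\lvert (A\cap B)\cap[m]\rvert \le \lvert A\cap B\rvert < t$.

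The main point to get right, and the only place where any subtlety lives, is the direction of the inequality: a homomorphism need only send edges to edges, and the intersection can only shrink when we pass from $A\cap B$ to its trace on $[m]$, so edges are always preserved. (The converse can fail, which is exactly why $M\pr(m,k,t)$ has independent sets that are $t$-intersecting even though not every $t$-intersecting family is independent, as noted before the proposition; this is harmless for the homomorphism claim.) I would phrase the whole argument in two or three sentences: apply Proposition~\ref{homomorphism} for properties 1 and 2, then compute $S_{f(A)}\cap S_{f(B)}=(A\cap B)\cap[m]$ using property 2 and conclude $\lvert A\cap B\rvert<t\Rightarrow\lvert S_{f(A)}\cap S_{f(B)}\rvert<t$, which is property 3. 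I expect no genuine obstacle here; the content of the proposition is entirely inherited from Proposition~\ref{homomorphism}, and the verification of item 3 is a one-line containment argument.
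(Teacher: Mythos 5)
Your proposal is correct and follows essentially the same route as the paper: reuse the bijection of Proposition~\ref{homomorphism} for properties 1 and 2, then verify edge-preservation via property 2, noting that $S_{f(A)}\cap S_{f(B)}=(A\cap B)\cap[m]\subseteq A\cap B$ so $\lvert A\cap B\rvert<t$ forces adjacency in $M\pr(m,k,t)$. Your write-up is in fact slightly more explicit than the paper's, which leaves the containment step implicit in the phrase ``it follows from Property 2.''
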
 

\begin{proof}
The first two properties follow from Proposition~\ref{homomorphism}. Let $A_{1},A_{2}$ be two adjacent vertices in $K(n,k,t)$.  Then $$\left| A_{1} \cap A_{2} \right| < t$$ and it follows from Property 2 that $$\left| S_{f(A_{1})} \cap S_{f(A_{2})} \right| < t.$$  Thus $$\left| f(A_{1}) \cap f(A_{2}) \cap [m] \right| < t$$ and $f(A_{1})$ is adjacent to $f(A_{2})$ in $M\pr(m,k,t)$.  Hence the bijection $ f:  {[n] \choose k} \rightarrow \multichoose {[m]}{k}$ is a graph homomorphism from $K(n,k,t))$ to $M\pr(m,k,t)$. \qed
\end{proof}

If $f$ is a function with the properties given in Propostion~\ref{t-int hom} and $\fim$ is an independent set in $M\pr(m,k,t)$, then $f^{-1}(\fim)$ will be an independent set in $K(n,k,t)$.  Thus  Theorem~\ref{AK} gives an upper bound on the size of the largest independent set in $M\pr(m,k,t)$.  

We also make use of the down-compression operation defined in~\cite{FGV}.  This operation maintains or increases the size of the supports of the multisets in a $t$-intersecting family of multisets while maintaining the size and $t$-intersection of the family.  Repeated applications of this operation will transform any $t$-intersecting family of multisets into a $t$-intersecting family of the same size but with the additional property that $\left| F_{1} \cap F_{2} \cap [m] \right| \geq t$ for all $F_{1},F_{2}$ in the family.  We begin with some notation and definitions of concepts from~\cite{FGV}.

Let $\fim \subseteq \multichoose {[m]}{k}$ be $t$-intersecting.  A multiset $T$ is called a \textsl{$t$-kernel} for $\fim$ if $\left | F_{1} \cap F_{2} \cap T \right| \geq t$ for all $F_{1},F_{2} \in \fim$.  Note that the multiset containing $t$ copies of each of the integers in $[m]$ will be a $t$-kernel for any $t$-intersecting family.  Let $\mathcal K(\fim)$ denote the set of all $t$-kernels for $\fim$ that contain $[m]$.  For $T \in \mathcal K(\fim)$, let $T_{>1}=T\backslash[m]$.   Thus $T_{>1}$ contains only elements that occur more than once in $T$.

A multiset can be represented by a set of ordered pairs where the first integer is an element of $[m]$ and the second integer serves to differentiate among the copies of that element in the multiset.  For example, the multiset $\{1,1,1,4\}$ can be written as $\{(1,1),(1,2),(1,3),(4,1)\}$.  This ordered pair notation is used in the definition of the following shifting operation.

For $s \leq \mathrm{m}(i,F)$ and $j \in [m]$ with $j \notin F$, let $F_{(i,s)(j)}$ denote the multiset formed from $F$ by replacing all but $s-1$ of the copies of $i$ with $j$.  That is, 
\begin{multline*} F_{(i,s)(j)} = (F\backslash\{(i,x): x \in [s, \mathrm{m}(i,F)]\})\\ \cup 
\{(j,y):  y \in [1,(\mathrm{m}(i,F)-s+1)]\}.
\end{multline*}
 If $s > \mathrm{m}(i,F)$, then $F_{(i,s)(j)}=F$.   F\"{u}redi et al.~\cite{FGV} define a shifting operation on $\fim$ as follows: 

\begin{align*}
\mathcal S_{(i,s)(j)}(\fim)&=\{\mathcal S_{(i,s)(j)}(F) : \: F \in \fim\}, \\
\intertext{where}
\mathcal S_{(i,s)(j)}(F)&=\begin{cases} F_{(i,s)(j)} \: &\mbox { if } j \notin F \mbox { and } F_{(i,s)(j)} \notin \fim ,\\
F \: &\mbox { otherwise}.
\end{cases}
\end{align*}

Provided that $s \geq 2$, this operation increases the size of the support of any multiset $F$ such that $\mathcal S_{(i,s)(j)}(F) \neq F$.  For example, if $F=\{1,1,1,3,4\}$ then $S_{(1,2)(2)}(F) = \{1,2,2,3,4\}$ and the size of the support increases by one.

The down-compression operation consists of combining this shifting operation with the concept of $t$-kernels.  Given a $t$-intersecting family of $k$-multisets from $[m]$ and a $t$-kernel, $T$, such that $[m] \subseteq T$, the down-compression operation consists of sequentially applying the shifting operation for some $i \in [m]$ such that $i \in T_{>1}$ and for all values of $j$ from $1$ to $m$ with $s=\mathrm{m}(i,T)$.   The next lemma shows that applying the down-compression operation gives a $t$-intersecting family of $k$-multisets of the same size as the original family and proves that the new family will have $T\backslash\{i,\mathrm{m}(i,T)\}$ as a $t$-kernel.

\begin{theorem}[F{\"u}redi, Gerbner and Vizer~\cite{FGV} ]\label{down comp}
Let $t$, $k$ and $m$ be positive integers such that $k \geq t$ and $m \geq 2k-t$.  Let $\fim$ be a $t$-intersecting family of $k$-multisets of $[m]$ and let $T \in \mathcal K(\fim)$ with $T_{>1} \neq \emptyset$.  For $i \in T_{>1}$, set $s=\mathrm{m}(i,T)$ and define $$\widehat{\fim} = \mathcal S_{(i,s)(m)}\bigg[\mathcal S_{(i,s)(m-1)}\Big[...\big[\mathcal S_{(i,s)(1)}(\fim)\big]\Big]\bigg].$$  Then:
\begin{enumerate}
\item
 $\widehat{\fim}$ is a $t$-intersecting family of $k$-multisets of $[m]$ and $| \widehat{\fim} | = \left|\fim\right|$, 
\item
 $T \backslash \{i,\mathrm{m}(i,T)\} \in \mathcal K(\widehat{\fim})$.
\end{enumerate}
\end{theorem}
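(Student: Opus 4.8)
The plan is to prove the two conclusions in order: first that $\widehat{\fim}$ consists of $k$-multisets of $[m]$ with $|\widehat{\fim}|=|\fim|$, and then that $T\backslash\{i,\mathrm{m}(i,T)\}$ is a $t$-kernel for $\widehat{\fim}$; the $t$-intersecting assertion of part~1 will follow for free, since any family with a $t$-kernel is $t$-intersecting. For the size, I would show that a single shift $\mathcal S_{(i,s)(j)}$ is injective on the family it acts on: if two multisets share an image, a check of the three cases (both moved, both fixed, one of each) forces them to be equal, the mixed case being impossible because a moved multiset has image outside the current family while a fixed one has image equal to itself inside it. Since each shift preserves the cardinality of every multiset and sends elements of $[m]$ to elements of $[m]$, composing the $m$ shifts gives $\widehat{\fim}\subseteq\multichoose{[m]}{k}$ with $|\widehat{\fim}|=|\fim|$.

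The core is to track the kernel through the shifts. I would first prove a one-step lemma: if $T$ is a $t$-kernel for a family $\mathcal H$ (with $i\in T_{>1}$, $s=\mathrm{m}(i,T)$), then $T$ is again a $t$-kernel for $\mathcal S_{(i,s)(j)}(\mathcal H)$. Given images $H_1',H_2'$ of $H_1,H_2\in\mathcal H$, the cases where both or neither are moved are routine, since moving the copies of $i$ above the $(s-1)$st to $j$ changes the $i$- and $j$-contributions to $H_1'\cap H_2'\cap T$ by compensating amounts and so cannot lower the $T$-intersection; the same compensation handles a moved/fixed pair when the fixed multiset has fewer than $s$ copies of $i$ or contains $j$. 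The one delicate case is $H_1'=(H_1)_{(i,s)(j)}$ moved, $H_2'=H_2$ fixed, with $\mathrm{m}(i,H_2)\ge s$ and $j\notin H_2$. Here being fixed forces $(H_2)_{(i,s)(j)}\in\mathcal H$, and a direct computation gives the identity $|H_1'\cap H_2\cap T|=|H_1\cap (H_2)_{(i,s)(j)}\cap T|$, which is at least $t$ by the kernel property applied to the distinct pair $H_1,(H_2)_{(i,s)(j)}$ (they carry different multiplicities of $i$). Iterating over $j=1,\dots,m$ shows $T$ is a $t$-kernel for $\widehat{\fim}$, so $\widehat{\fim}$ is $t$-intersecting.

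Next I would establish a structural lemma: if $\widehat F\in\widehat{\fim}$ has $\mathrm{m}(i,\widehat F)\ge s$ and $j\notin\widehat F$, then $\widehat F_{(i,s)(j)}\in\widehat{\fim}$. The observations are that a shift can only reduce the multiplicity of $i$ (down to $s-1$), so any multiset carrying at least $s$ copies of $i$ that appears at some stage appeared at every earlier stage and is fixed by each shift; in particular $\widehat F$ is fixed at step $j$, which forces $\widehat F_{(i,s)(j)}$ to lie in the family already at that stage. Since $\widehat F_{(i,s)(j)}$ has only $s-1$ copies of $i$ and has $j$ in its support, it is a fixed point of every subsequent shift and, by injectivity, survives to $\widehat{\fim}$.

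Finally, to upgrade to $T'=T\backslash\{i,\mathrm{m}(i,T)\}$, observe that $T'$ differs from $T$ only in carrying $s-1$ rather than $s$ copies of $i$, so for a pair $\widehat F,\widehat G$ the $T'$- and $T$-intersections agree unless both multisets have at least $s$ copies of $i$, in which case the former is exactly one smaller. Thus it remains to show $|\widehat F\cap\widehat G\cap T|\ge t+1$ whenever $\mathrm{m}(i,\widehat F),\mathrm{m}(i,\widehat G)\ge s$. Such multisets have supports of size at most $k-s+1$, and if their supports covered all of $[m]$ a short count gives $|\widehat F\cap\widehat G|\le 2k-s-m+1\le t-1$ (using $m\ge 2k-t$ and $s\ge 2$), contradicting $t$-intersecting; hence there is some $j\notin\widehat F\cup\widehat G$. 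The structural lemma then gives $\widehat F_{(i,s)(j)}\in\widehat{\fim}$, and applying the kernel property to $\widehat F_{(i,s)(j)},\widehat G$ and comparing intersections (the surplus copy of $i$ in $\widehat F$ is worth one, and the moved copies land on $j$, which $\widehat G$ avoids) yields $|\widehat F\cap\widehat G\cap T|=|\widehat F_{(i,s)(j)}\cap\widehat G\cap T|+1\ge t+1$. This proves $T'\in\mathcal K(\widehat{\fim})$, finishing part~2 and hence part~1. I expect the main obstacle to be the bookkeeping in the structural lemma—certifying which multisets persist unchanged through the entire sequence of shifts and in which intermediate family each object lives—together with setting up the intersection-transfer identity in the mixed case of the one-step lemma exactly; the counting step that locates a common non-element is precisely where $m\ge 2k-t$ and $s\ge 2$ are needed.
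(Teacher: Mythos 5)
Your proposal cannot be checked against the paper's own proof because the paper does not contain one: Theorem~\ref{down comp} is quoted from F\"uredi, Gerbner and Vizer~\cite{FGV} and is used in Section~\ref{t-int} purely as an imported black box (as is Theorem~\ref{furedi 2}); the authors only prove the lemmas built on top of it (Lemmas~\ref{structure}, \ref{downcomp maintains structure} and \ref{structure if r=0}). Judged on its own merits, your reconstruction is correct. The injectivity argument works because a moved multiset's image has exactly $s-1$ copies of $i$ and lies outside the current family, a fixed multiset's image is itself and lies inside it, and a moved image determines its preimage (every copy of $j$ in it came from $i$). In the one delicate case of your one-step kernel lemma --- $H_1$ moved, $H_2$ fixed with $j \notin H_2$ and $\mathrm{m}(i,H_2) \geq s$, which can only happen because $(H_2)_{(i,s)(j)} \in \mathcal H$ --- the transfer identity does hold: on both sides the $i$-contribution is $s-1$ and the $j$-contribution is $0$, all other elements being untouched, so the kernel property applied to the pair $H_1$, $(H_2)_{(i,s)(j)}$ closes the case. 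Your structural lemma is also sound: no shift can create a multiset with at least $s$ copies of $i$, so such a multiset in $\widehat{\fim}$ sits, as a fixed point, in every intermediate family; being fixed at step $j$ while $j \notin \widehat F$ forces $\widehat F_{(i,s)(j)}$ into that intermediate family, and $\widehat F_{(i,s)(j)}$ (containing $j$, with only $s-1$ copies of $i$) is immune to all remaining shifts. Finally, the count $|\widehat F \cap \widehat G| = 2k - \sum_{x \in [m]} \max(\mathrm{m}(x,\widehat F), \mathrm{m}(x,\widehat G)) \leq 2k - (m-1) - s \leq t+1-s \leq t-1$ is exactly where $m \geq 2k-t$ and $s = \mathrm{m}(i,T) \geq 2$ are consumed, and it supplies the common non-element $j$ that your final $+1$ comparison needs. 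The only step you leave implicit is that $T \setminus \{(i,\mathrm{m}(i,T))\}$ still contains $[m]$ (true because $s \geq 2$), which is required for membership in $\mathcal K(\widehat{\fim})$; that is a one-line remark, not a gap.
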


The second statement means that the down-compression operation can be performed on $\widehat{\fim}$ using $T \backslash \{i,\mathrm{m}(i,T)\}$ as the $t$-kernel.  Repeated applications of the operation results in a family that has $[m]$ as a $t$-kernel.  Thus any $t$-intersecting collection of $k$-multisets of $[m]$ can be transformed into a collection of the same size in which $\left| F_{1} \cap F_{2} \cap [m] \right| \geq t$ for all $F_{1},F_{2}$ in the family.  This combined with the next theorem is sufficient to prove Theorem~\ref{furedi}.  (Recall that $AK(m+k-1,k,t)$ denotes the size of the largest $t$-intersecting $k$-subset system from $[m+k-1]$ given by Theorem~\ref{AK}.)

\begin{theorem}[F{\"u}redi, Gerbner and Vizer~\cite{FGV} ]\label{furedi 2}
Let $1 \leq t \leq k$ and let $2k-t \leq m$.  If $\fim$ is a $t$-intersecting family of $k$-multisets of $[m]$ such that $\left| F_{1} \cap F_{2} \cap [m]\right|\!~\geq\!~t$ for all $F_{1},F_{2} \in \fim$, then $$\left| \fim \right| \leq AK(m\!+\!k\!-\!1,k,t).$$
\end{theorem}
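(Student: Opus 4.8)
The plan is to transport the problem into the world of $k$-subsets by means of the bijective homomorphism of Proposition~\ref{t-int hom}, and then to invoke the Ahlswede--Khachatrian theorem. Set $n = m+k-1$ and let $f:\binom{[n]}{k} \rightarrow \multichoose{[m]}{k}$ be a function with the three properties listed in Proposition~\ref{t-int hom}. The decisive observation is that the hypothesis $\left| F_{1} \cap F_{2} \cap [m]\right| \geq t$ for every pair $F_{1}, F_{2} \in \fim$ says \emph{exactly} that $\fim$ is an independent set in the graph $M\pr(m,k,t)$, since by definition two multisets are adjacent in $M\pr(m,k,t)$ precisely when their supports meet in fewer than $t$ elements.

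First I would record the general fact that pulling an independent set back along a graph homomorphism again yields an independent set, and apply it here. Set $\mathcal B = f^{-1}(\fim)$. If two members $A_{1}, A_{2} \in \mathcal B$ were adjacent in $K(n,k,t)$, then Property~3 of Proposition~\ref{t-int hom} (that $f$ is a homomorphism from $K(n,k,t)$ to $M\pr(m,k,t)$) would force $f(A_{1})$ and $f(A_{2})$ to be adjacent in $M\pr(m,k,t)$; but $f(A_{1}), f(A_{2}) \in \fim$, contradicting the independence of $\fim$ just noted. Hence $\mathcal B$ is an independent set in $K(n,k,t)$, that is, a $t$-intersecting family of $k$-subsets of $[n]$. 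Because $f$ is a bijection (Property~1), we also get $\left| \mathcal B \right| = \left| \fim \right|$.

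Next I would apply Theorem~\ref{AK} to $\mathcal B$. By definition, $AK(m\!+\!k\!-\!1,k,t) = AK(n,k,t)$ is the maximum size of a $t$-intersecting family of $k$-subsets of $[n]$ guaranteed by the Ahlswede--Khachatrian theorem, so $\left| \mathcal B \right| \leq AK(m\!+\!k\!-\!1,k,t)$. Chaining the equality and the inequality yields $\left| \fim \right| = \left| \mathcal B \right| \leq AK(m\!+\!k\!-\!1,k,t)$, which is exactly the claimed bound.

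With all the machinery already available, I do not anticipate a serious obstacle: the whole argument rests on the single structural fact that a graph homomorphism carries the independence of a target set back to its preimage, combined with the cardinality-preserving bijectivity of $f$. The one point that must be stated carefully is that the hypothesis on $\fim$ is stronger than mere $t$-intersection, namely that the \emph{supports} of every pair meet in at least $t$ elements; it is precisely this support condition that makes $\fim$ independent in $M\pr(m,k,t)$, whereas an arbitrary $t$-intersecting family of multisets need not be. This is exactly the reason the down-compression of Theorem~\ref{down comp} is applied beforehand to reduce to families satisfying this condition, and why the present statement, once proved, suffices to complete the proof of Theorem~\ref{furedi}.
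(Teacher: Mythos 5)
Your proof is correct and is essentially the paper's own approach: although Theorem~\ref{furedi 2} is attributed to F\"uredi, Gerbner and Vizer, the paper's remark immediately following Proposition~\ref{t-int hom} and the opening of the proof of Lemma~\ref{structure} carry out precisely your argument --- the support hypothesis makes $\fim$ an independent set in $M\pr(m,k,t)$, the preimage under the bijective homomorphism $f$ is an independent set in $K(n,k,t)$ of the same size, and Theorem~\ref{AK} then gives the bound $AK(m\!+\!k\!-\!1,k,t)$. No gaps.
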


We use a similar approach to prove Theorem~\ref{our t result}.  We first prove the result for families where  $\left| F_{1} \cap F_{2} \cap [m] \right| \geq t$ for all $F_{1},F_{2}$ in the family and then show that this can be extended to all $t$-intersecting families using the down-compression operation.

\begin{lemma}\label{structure}
Let $1 \leq t \leq k$ and let $r$ be a non-negative integer such that $r \leq k-t$. For $m\geq 2k-t$, let $\fim$ be a collection of $k$-multisets of $[m]$ such that $\left| F_{1} \cap F_{2} \cap [m]\, \right| \geq t$ for all $F_{1},F_{2} \in \fim$.

\begin{enumerate}
\item If \[(k-t+1)\left(2 + \frac{t-1}{r+1}\right) <m\!+\!k\!-\!1 < (k-t+1)\left(2 + \frac{t-1}{r}\right),\] then $\left| \fim \right| \leq \left | \sfm{r}\right|$.   (By convention, $\frac{t-1}{r} = \infty$ for $r=0$.)
Equality holds if and only if $\fim$ is isomorphic to $\sfm{r}$.
 
\item If $$m+k-1=(k-t+1)\left(2 + \frac{t-1}{r+1}\right),$$ then $\left| \fim \right| \leq \left | \sfm{r}\right| = \left| \sfm{r+1} \right|$. 
 Equality holds if and only if $\fim$ is isomorphic to either $\sfm{r}$ or $\sfm{r+1}$.
\end{enumerate}

 \end{lemma}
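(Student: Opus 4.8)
The plan is to transfer the problem to set systems through the homomorphism of Proposition~\ref{t-int hom} and then invoke the Ahlswede--Khachatrian theorem. Set $n=m+k-1$ and fix a function $f$ as in Proposition~\ref{t-int hom}. The hypothesis $|F_{1}\cap F_{2}\cap[m]|\geq t$ says precisely that $\fim$ is an independent set in $M\pr(m,k,t)$, so $\fis=f^{-1}(\fim)$ is an independent set in $K(n,k,t)$, i.e.\ a $t$-intersecting family of $k$-subsets of $[n]$. Since $r\leq k-t$ and $m\geq 2k-t$ force $t+2r\leq 2k-t\leq m$, the set $[t+2r]$ lies inside $[m]$; combining this with $S_{f(A)}=A\cap[m]$ gives $|f(A)\cap[t+2r]|=|A\cap[t+2r]|$ for every $A$, so $f(\sfs{r})=\sfm{r}$ and hence $|\sfm{r}|=|\sfs{r}|$. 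As $f$ is a bijection, $|\fim|=|\fis|$, and Theorem~\ref{AK} yields the stated upper bounds $|\fim|\leq|\sfs{r}|=|\sfm{r}|$ in part (1) and $|\fim|\leq|\sfs{r}|=|\sfm{r}|=|\sfm{r+1}|$ in part (2).

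For the characterization of equality, suppose $|\fim|=|\sfm{r}|$, so that $|\fis|=|\sfs{r}|$. By Theorem~\ref{AK}, $\fis$ is isomorphic to $\sfs{r}$ (and in part (2) to $\sfs{r}$ or $\sfs{r+1}$); that is, there is a set $Y\subseteq[n]$ with $|Y|=t+2r$ (respectively $t+2(r+1)$) such that $\fis=\{A\in\binom{[n]}{k}:|A\cap Y|\geq t+r\}$ (respectively with threshold $t+r+1$). I want to deduce that $\fim$ is isomorphic to $\sfm{r}$ (resp.\ $\sfm{r+1}$), and for this it suffices to show the core set $Y$ may be taken inside $[m]$. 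Indeed, if $Y\subseteq[m]$ then $|f(A)\cap Y|=|A\cap Y|$ for all $A$, so $f$ carries $\fis$ onto $\{G\in\multichoose{[m]}{k}:|G\cap Y|\geq t+r\}$, which is isomorphic to $\sfm{r}$ via any permutation of $[m]$ sending $Y$ to $[t+2r]$.

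The heart of the argument, and the step I expect to be the main obstacle, is establishing $Y\subseteq[m]$. I would argue by contradiction: assuming $p=|Y\cap[m+1,n]|\geq 1$, I produce two distinct sets $A_{1},A_{2}\in\fis$ whose images have support-intersection smaller than $t$, contradicting the hypothesis on $\fim$. The mechanism is that the $p$ elements of $Y$ lying in $[m+1,n]$ can be placed in both $A_{1}$ and $A_{2}$, where they count toward the threshold $|A_{i}\cap Y|\geq t+r$ but contribute nothing to the supports $A_{i}\cap[m]$; one then fills out $A_{1},A_{2}$ so their intersections with $[m]$ are subsets of $Y\cap[m]$ of minimal overlap. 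As $|Y\cap[m]|=t+2r-p$ and each $A_{i}$ needs only $\max(0,t+r-p)$ elements of $Y\cap[m]$, a short inclusion--exclusion count shows the two supports can be made to meet in at most $\max(0,t-p)\leq t-1$ points, with the remaining coordinates drawn from $[m+1,n]\setminus Y$, of which there are $k-1-p$. The delicate cases are those where this reservoir of outside coordinates is nearly exhausted (roughly, $p$ close to $t+r$, or $r=0$ with $p=t-1$): there one cannot keep both supports inside $Y\cap[m]$, but forcing $A_{1}$ to absorb all of $[m+1,n]$ and letting $A_{2}$ carry one extra coordinate in $[m]\setminus Y$ still yields the required pair. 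Once $Y\subseteq[m]$ is established, the conclusion follows as above; in part (2) the same argument applied to a core set of size $t+2(r+1)$, which also lies in $[m]$ because $t+2(r+1)\leq 2k-t\leq m$, handles the $\sfm{r+1}$ alternative.
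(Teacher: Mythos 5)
Your proposal is correct and takes essentially the same route as the paper: transfer $\fim$ to a $t$-intersecting $k$-set system via the bijective homomorphism of Proposition~\ref{t-int hom}, apply Theorem~\ref{AK} for the bound, and for equality show that the Ahlswede--Khachatrian centre set must lie inside $[m]$ by exhibiting two sets in the preimage whose images have supports meeting in fewer than $t$ elements. The paper builds its contradicting pair slightly more simply (two sets through a single element $x \notin [m]$, meeting in exactly $t$ elements including $x$), but this is the same idea as your construction.
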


\begin{proof}
If $\fim$ satisfies the conditions in the statement of the lemma, then $\fim$ is an independent set in the graph $M\pr(m,k,t)$.  Set $n=m+k-1$ and let  $f: {[n] \choose k} \rightarrow \multichoose {[m]}{k}$ be a function with the properties given in Proposition~\ref{t-int hom}. Then $\fis=f^{-1}(\fim)$ is an independent set in $K(n,k,t)$ and so applying Theorem~\ref{AK} gives $\left| \fis\right| \leq \left| \sfs{r} \right|$ for some $r \in [0,(k-t)]$.  Thus $\left| \fim \right| \leq \left| \sfs{r} \right|$ and since $\left| \sfs{i} \right| = \left| \sfm{i} \right| $ whenever both are defined, it follows that $\left| \fim \right| \leq \left| \sfm{r} \right|$. 

We now consider the structure of the families attaining the maximum possible size in the two statements in the lemma.
\begin{enumerate}
\item Let $(k-t+1)\left(2 + \frac{t-1}{r+1}\right) <m\!+\!k\!-\!1 < (k-t+1)\left(2 + \frac{t-1}{r}\right)$.\\
By Theorem~\ref{AK}, if $\left| \fis \right| = \left| \sfs{r}\right|$, then $\fis$ is isomorphic to $\sfs{r}$.  That is, $\fis$ will consist of all $k$-subsets of $[n]$ containing at least $t+r$ elements of $X$, where $X$ is a subset of $[n]$ of size $t+2r$.  If $X \subseteq [m]$, it follows from the definition of the function $f$ that every multiset in $\fim$ will contain $t+r$ elements of $X$.  Thus $\fim$ is isomorphic to $\fim_{(r)}$.  If $X \nsubseteq [m]$, then there is some $x \in X$ such that $x \notin [m]$.  Since $f^{-1}(\fim)$ will consist of all $k$-subsets of $[n]$ containing at least $t+r$ elements of $X$, there will be a pair of sets $A_{1},A_{2} \in f^{-1}(\fim)$ with $x \in A_{1} \cap A_{2}$ such that $\left| A_{1} \cap X \right| = \left | A_{2} \cap X \right| = t+r$ and $\left| A_{1} \cap A_{2} \right| =t$.  Then $\left| A_{1} \cap A_{2} \cap [m]\,\right| <t$ and so $\left| f(A_{1}) \cap f(A_{2}) \cap [m] \,\right| < t$.   Thus $f(A_{1})$ is adjacent to $f(A_{2})$ in $M\pr(m,k,t)$ which contradicts our assumption that $\fim$ is an independent set in $M\pr(m,k,t)$.  Therefore, any family satisfying the conditions in the lemma is isomorphic to $\fim_{(r)}$.

\item Let $m+k-1=(k-t+1)\left(2 + \frac{t-1}{r+1}\right)$.\\
Again using Theorem~\ref{AK} we have that $ \fis$ attains the maximum possible size if and only if $\fis$ is isomorphic to either $\sfs{r}$ or $\sfs{r+1}$. The argument used in the first case can be used to show that $\fim$ must be isomorphic to either $\sfm{r}$ or $\sfm{r+1}$.  \qed
\end{enumerate}
\end{proof}

Lemma~\ref{structure} along with Theorem~\ref{down comp} are sufficient to prove the size of the largest $t$-intersecting families as given in Theorem~\ref{our t result}.  The next two lemmas are needed to extend the structure portion of Lemma~\ref{structure} to all $t$-intersecting multiset families.  The first deals with the case when down-compression of the family of maximum size results in a family isomorphic to $\sfm{r}$ for some $r>0$.  The second deals with the case when down-compression of the family results in a family isomorphic to $\sfm{0}$.  The need to treat $r=0$ separately arises from the fact that a family consisting of all $k$-multisets containing a fixed $t$-multiset will have the same cardinality regardless of the multiplicity of each of the elements in the $t$-multiset.  Thus there are families of multisets that have the same size as $\sfm{0}$ but that are not isomorphic to $\sfm{0}$.  For example, if $m=5$, $k=4$ and $t=2$, the family consisting of all $4$-multisets containing $\{1,1\}$ is not isomorphic to $\sfm{0}$ but has the same size as $\sfm{0}$, the family consisting of all $4$-multisets containing $\{1,2\}$.  This does not occur when $r>0$.  If $r>0$, a family consisting of all $k$-multisets containing at least $t+r$ elements from a $(t\!+\!2r)$-multiset in which the multiplicity of some element is greater than one contains fewer $k$-multisets than $\sfm{r}$.  For instance, if $m=5$, $k=4$ and $t=2$, then $$\sfm{1}=\left\{A \in \multichoose{[5]}{4}: \,\left| A \cap [4] \right| \geq 3\right\}$$ and so $$\left|\sfm{1} \right| = \binom{4}{4} + \binom{4}{3}\multichoose{4}{1}=17.$$ 
Now consider the family of multisets defined as follows.  $$\fim =\left\{A \in \multichoose{[5]}{4}: \, \left| A \cap \{1,1,2,3\}\right| \geq 3\right\}.$$  Then $$\left| \fim \right| = \binom{4}{4} + 3\multichoose{4}{1} = 13$$  since there are only three distinct ways to select three elements from $\{1,1,2,3\}$.  

Recall that $\widehat{\fim}$ is the family of $k$-multisets formed by applying the down-compression operation to $\fim$, a $t$-intersecting family of $k$-multisets from $[m]$.  It is defined as  
 $$\widehat{\fim} = \mathcal S_{(i,s)(m)}\bigg[\mathcal S_{(i,s)(m-1)}\Big[...\big[\mathcal S_{(i,s)(1)}(\fim)\big]\Big]\bigg]$$ where $i \in T_{>1}$ for some $t$-kernel $T \in \mathcal K(\fim)$ and $s=\mathrm{m}(i,T)$.

\begin{lemma}\label{downcomp maintains structure}
Let $1 \leq t \leq k$ and $m \geq 2k-t$.  Let $\fim \subseteq \multichoose{[m]}{k}$ be a $t$-intersecting family of multisets.  For $T \in \mathcal K(\fim)$ and $i \in [m]$ such that $i \in T_{>1}$, set $s=m(i,T)$.  If $\widehat{\fim}$ is isomorphic to $\fim_{(r)}$ for some $r>0$, then so is $\fim$.

\end{lemma}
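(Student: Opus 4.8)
The plan is to show that the single down-compression sweep is in fact \emph{trivial}, i.e.\ that $\fim=\widehat{\fim}$; since $\widehat{\fim}\cong\sfm{r}$ this immediately gives $\fim\cong\sfm{r}$. The starting point is that membership in $\sfm{r}$ depends only on the support: after applying the isomorphism I may write $\widehat{\fim}=\{G\in\multichoose{[m]}{k}:|S_{G}\cap X|\ge t+r\}$ for a fixed $(t+2r)$-set $X\subseteq[m]$, so $G\in\widehat{\fim}$ precisely when $|S_{G}\cap X|\ge t+r$. I would combine this with two bookkeeping facts about the sweep $\widehat{\fim}=\mathcal S_{(i,s)(m)}[\cdots[\mathcal S_{(i,s)(1)}(\fim)]]$, read off directly from the definition of $\mathcal S_{(i,s)(j)}$: each multiset is moved at most once (once its multiplicity of $i$ drops to $s-1<s$ it can no longer be shifted), and every multiset produced by a shift has $\mathrm{m}(i,\cdot)=s-1\ge1$ (recall $s=\mathrm{m}(i,T)\ge2$ since $i\in T_{>1}$). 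Consequently \emph{any member of $\widehat{\fim}$ whose support omits $i$ must be an unmoved multiset, and therefore already lies in $\fim$}.

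Next I would assume, for contradiction, that some $F\in\fim$ is actually moved, say to $G=F_{(i,s)(j)}$. Then $\mathrm{m}(i,F)\ge s$, $j\notin S_{F}$, and $S_{G}=S_{F}\cup\{j\}$ with $i\in S_{G}$. Since a moved multiset cannot itself be the image of any shift (images have $i$-multiplicity $s-1<\mathrm{m}(i,F)$), we get $F\notin\widehat{\fim}$ while $G\in\widehat{\fim}$. Feeding this into the support description of $\widehat{\fim}$ forces $|S_{F}\cap X|=t+r-1$, $j\in X$, and $|S_{G}\cap X|=t+r$ exactly (from $|S_G\cap X|\le|S_F\cap X|+1$ squeezed between $t+r-1$ and $t+r$). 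In particular $Y:=S_{F}\cap X$ has size $t+r-1$, and $X\setminus Y=X\setminus S_{F}$ has size $r+1$.

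The heart of the argument is then to exhibit $H\in\fim$ with $|F\cap H|<t$, contradicting that $\fim$ is $t$-intersecting. I would take $H$ with support $(X\setminus Y)\cup Y'$, where $Y'\subseteq Y\setminus\{i\}$ is any $(t-1)$-subset, giving all support-elements multiplicity one except one extra-loaded element of $X\setminus Y$ so that $H$ is a $k$-multiset (possible since $|(X\setminus Y)\cup Y'|=t+r\le k$). Then $S_{H}\cap S_{F}=Y'$ has size $t-1$ with each shared element of multiplicity one in $H$, so $|F\cap H|=t-1<t$; meanwhile $|S_{H}\cap X|=t+r$, so $H\in\sfm{r}=\widehat{\fim}$, and $i\notin S_{H}$, so by the bookkeeping of the first paragraph $H$ is unmoved, whence $H\in\fim$. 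This contradiction shows no shift is active, so $\fim=\widehat{\fim}\cong\sfm{r}$.

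The step I expect to require the most care is producing $H$: one must pick the $t-1$ shared elements inside $Y$ while \emph{avoiding} $i$, and it is exactly here that $r>0$ enters, since $|Y\setminus\{i\}|\ge(t+r-1)-1=t+r-2\ge t-1$ holds precisely when $r\ge1$. (For $r=0$ one may have $i\in Y$ with $|Y|=t-1$, leaving no room for $Y'$, which is why $r=0$ behaves differently and is handled separately.) The other delicate point, already isolated above, is the claim that an $i$-free member of $\widehat{\fim}$ must lie in $\fim$; I would establish it from the two structural facts about $\mathcal S_{(i,s)(j)}$ rather than from Theorem~\ref{down comp}.
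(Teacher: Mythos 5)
Your proof is correct, and it is essentially the paper's own argument: the decisive step in both is the same witness construction --- a multiset supported inside the fixing $(t+2r)$-set, avoiding $i$ (exactly where $r>0$ is needed), and sharing only $t-1$ elements with the preimage of a shifted multiset --- which contradicts $t$-intersection because a member of the compressed family whose support omits $i$ must already lie in $\fim$; this is precisely the multiset $B\pr$ built in Case~2 of the paper's proof. The remaining differences are organizational: the paper reduces to a single shift $\mathcal S_{(i,s)(j)}$, shows $\mathcal S_{(i,s)(j)}(\fim)\subseteq\fim$ element-by-element in cases, and finishes with the cardinality identity $\left|\mathcal S_{(i,s)(j)}(\fim)\right|=\left|\fim\right|$, whereas you track the whole sweep with your two bookkeeping facts and derive a direct contradiction from any active shift, which lets you skip both the case analysis and the cardinality step.
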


\begin{proof}
It is sufficient to show that if $\mathcal S_{(i,s)(j)}(\fim)$ is isomorphic to $\fim_{(r)}$ for some $r>0$, then so is $\fim$.  Let $\mathcal S_{(i,s)(j)}(\fim)=\fim\pr$.  We assume without loss of generality that $\fim\pr = \fim_{(r)}$.  Thus $$\fim\pr = \left\{ B \in \multichoose{[m]}{k}: \left| B \cap [t\!+\!2r]\,\right|\geq t+r\right\}.$$ 

If $B \in \fim\pr$ and $j \notin B$ or $\mathrm{m}(i,B) \neq s-1$, then it follows from the definition of the down-compression operation that $B \in \fim$. Therefore, in the remainder of the proof we assume that $j \in B$ and $\mathrm{m}(i,B) = s-1$ for some $B \in \fim\pr$.  With these assumptions, either $B \in \fim$ or there is some other multiset $A \in \fim$ such that $S_{(i,s)(j)}(A)= B$.  Then $$A = (B\backslash\{(j,1),\dots,(j,\mathrm{m}(j,B))\}) \cup \{(i,s),\dots,(i,s\!+\!\mathrm{m}(j,B)\!-\!1)\}.$$  Note that $j \notin A$ and that $\mathrm{m}(i,A) \geq s$. We consider two cases and show that in both $B \in \fim$.  

\begin{enumerate}
\item Case 1:  $j \notin [t+2r]$.

Then $\left| A \cap [t\!+\!2r]\, \right| = \left| B \cap [t\!+\!2r]\,\right| \geq t+r$ and so $A \in \fim\pr$.  But $j \notin A$ and $\mathrm{m}(i,A) \geq s$, so $A \in \fim\pr$ only if $B \in \fim$.  Thus $B \in \fim\pr$ implies that $B \in \fim$. 

\item Case 2: $j \in [t+2r]$.

If $\left| B \cap [t\!+\!2r] \, \right| >t+r$, then $\left| A \cap [t\!+\!2r]\, \right| \geq t+r$.  Thus $A \in \fim\pr$ which implies that $B \in \fim$.

Now suppose that $\left| B \cap [t\!+\!2r] \, \right| =t+r$.  We can construct a multiset $B\pr$ such that $\left| B\pr \cap [t\!+\!2r] \,\right| = t+r$ and $\left| B \cap B\pr \cap [t\!+\!2r] \,\right|=t$ with $j \in B \cap B\pr$.  We can further stipulate that $\mathrm{m}(j,B\pr)=k-t-r+1$ and that $i \notin B\pr$.  This last condition is possible even if $i \in [t\!+\!2r]$ since $r>0$ and $i \in B$.  Clearly $B\pr \in \fim\pr$ and since $\mathrm{m}(i,B\pr) \neq s-1$, it must also be in $\fim$.  But $\left| B\pr \cap A \right| = t-1$ and so $A$ cannot be in $\fim$.  Therefore $B \in \fim$.
\end{enumerate}
Thus, if $B$ is an element of $\fim\pr$ it is also in $\fim$.   Since $\left| \fim\pr \right| = \left| \fim \right|$ by Lemma~\ref{down comp}, it follows that $\fim\pr =\fim $. \qed
\end{proof}

\begin{lemma}\label{structure if r=0}
Let $1 \leq t \leq k$ and $m \geq 2k-t$.  Let $\fim \subseteq \multichoose{[m]}{k}$ be a $t$-intersecting family.  For $T \in \mathcal K(\fim)$ and $i \in [m]$ such that $i \in T_{>1}$, set $s=m(i,T)$.  If $\widehat{\fim}$ is a collection of all of the $k$-multisets of $[m]$ containing a fixed $t$-multiset, then $\fim$ is a collection of all of the $k$-multisets of $[m]$ containing a fixed $t$-set.  

\end{lemma}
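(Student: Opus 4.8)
The plan is to imitate the proof of Lemma~\ref{downcomp maintains structure}, specialised to the case $r=0$, in which $\sfm{0}$ is precisely the collection of all $k$-multisets containing the fixed $t$-set $[t]$. As in that proof it is enough to treat a single shift: I would show that if $\fim\pr := \mathcal S_{(i,s)(j)}(\fim)$ equals the collection $\{B \in \multichoose{[m]}{k} : R \subseteq B\}$ of all $k$-multisets containing a fixed $t$-multiset $R$, then $\fim$ is the collection of all $k$-multisets containing a fixed $t$-set $Q$; composing the $m$ shifts that constitute $\widehat{\fim}$ then yields the lemma. Since $|\fim| = |\fim\pr|$ by Theorem~\ref{down comp}, it will suffice to prove a single inclusion between $\fim$ and the candidate family determined by $Q$.

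I would first record, exactly as in Lemma~\ref{downcomp maintains structure}, that $\fim$ and $\fim\pr$ can differ only on those $B$ with $j \in B$ and $\mathrm{m}(i,B) = s-1$: for every other $B \in \fim\pr$ the shift fixes $B$, so $B \in \fim$. For such a critical $B$, either $B \in \fim$ already or there is a unique $A \in \fim$ with $\mathcal S_{(i,s)(j)}(A) = B$, obtained from $B$ by deleting every copy of $j$ and adjoining the matching copies of $i$, so that $\mathrm{m}(i,A) \geq s$ and $j \notin A$. The task is to force each critical $B$ into $\fim$ and then to read off $Q$ as the common set underlying the resulting family.

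The step at which $r=0$ genuinely differs from $r>0$, and the one I expect to be the main obstacle, is the identification of this fixed object. When $r=0$ a family of all $k$-multisets containing a fixed $t$-multiset and a family containing a fixed $t$-set have equal cardinality, so the counting identity $|\fim| = |\fim\pr|$ cannot by itself separate them, and one must argue directly that the compression has eliminated every repetition. The plan is to use the device from the second case in the proof of Lemma~\ref{downcomp maintains structure}: given a critical $B$, I would construct an auxiliary multiset $B\pr$ with $\bigl|B \cap B\pr \cap [m]\bigr| = t$ and, crucially, $i \notin B\pr$. Because $s \geq 2$ the element $i$ survives in $B$ with multiplicity $s-1$, and because $m \geq 2k-t$ there is room to place the coordinates of $B\pr$ so as to avoid $i$; then $B\pr \in \fim\pr$ forces $B\pr \in \fim$, while the companion preimage $A$ satisfies $|A \cap B\pr| = t-1$ and so cannot lie in the $t$-intersecting family $\fim$. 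Hence $B \in \fim$.

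Performing this for every critical $B$ would exhibit $\fim$ as the collection of all $k$-multisets containing a fixed set, and the systematic avoidance of $i$ in the auxiliary multisets is exactly what forces that fixed set to be a genuine $t$-set $Q$ rather than a $t$-multiset. The delicate point on which everything turns is the verification that a suitable $B\pr$ avoiding $i$ always exists under the standing hypotheses $m \geq 2k-t$ and $i \in T_{>1}$ (so $s \geq 2$); granting this, the two inclusions together with $|\fim| = |\fim\pr|$ give $\fim = \{B \in \multichoose{[m]}{k} : Q \subseteq B\}$, completing the proof.
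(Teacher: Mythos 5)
Your plan reproduces only the easy half of the paper's argument, and the step you yourself flag as ``the delicate point'' is exactly where it breaks. At $r=0$, membership in $\fim\pr$ requires containing the \emph{entire} fixed $t$-multiset $X$: since $|X|=t$, the condition $|B\cap X|\geq t$ means $X\subseteq B$, with no slack. So if $i\in X$, every multiset of $\fim\pr$ contains $i$, and an auxiliary $B\pr\in\fim\pr$ with $i\notin B\pr$ simply does not exist; the hypotheses $m\geq 2k-t$ and $s\geq 2$ are beside the point, because the obstruction is the membership condition of $\fim\pr$, not room in $[m]$. In Lemma~\ref{downcomp maintains structure} the avoidance of $i$ was licensed precisely by $r>0$ (the proof there says the condition ``is possible even if $i \in [t+2r]$ since $r>0$''), and the failure of that device at $r=0$ is the very reason the paper proves this case in a separate lemma with a different argument.

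Worse, the inclusion $\fim\pr\subseteq\fim$ you aim to establish for every critical $B$ is false in general in the subcase $j\in X$, $\mathrm{m}(i,X)=s-1$: the shift can genuinely move the whole family. If $\fim$ is the family of all $k$-multisets containing $X\pr=(X\backslash\{(j,1)\})\cup\{(i,\mathrm{m}(i,X)+1)\}$, then $\mathcal S_{(i,s)(j)}$ carries it to the family fixed by $X$, and \emph{no} critical $B$ lies in $\fim$ --- which is consistent with the lemma, since its conclusion allows $\fim$ to be fixed by a different $t$-multiset than $\fim\pr$. The paper handles this by partitioning $\fim\pr$ into $\fim\pr_{0}$ (multiplicity of $i$ exceeding $s-1$, hence automatically in $\fim$) and $\fim\pr_{1}$, defining a graph on $\fim\pr_{1}$ with adjacency $B_{1}\cap B_{2}=X$, observing that $t$-intersection forces adjacent vertices to be both in $\fim$ or both changed, proving this graph connected (via $m>k$ and the vertices $X\cup\{(x,1),\dots,(x,k-t)\}$), and concluding an all-or-nothing dichotomy; in the ``all changed'' branch it still needs an auxiliary $C\in\fim\pr_{0}\subseteq\fim$ together with the $t$-intersecting property to force $\mathrm{m}(j,X)=1$ and to identify $X\pr$ as the new fixed multiset. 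None of this --- the dichotomy, the connectivity argument, or the identification of the alternative kernel --- appears in your proposal. Your closing claim that avoiding $i$ ``forces the fixed set to be a genuine $t$-set'' also points the wrong way: it is the compressed family $\widehat{\fim}$ whose multiplicities have been reduced, while the original $\fim$ may be fixed by a $t$-multiset with a repeated element, as in the example with all $4$-multisets containing $\{1,1\}$ given before Lemma~\ref{downcomp maintains structure}.
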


\begin{proof} It is sufficient to show that $S_{(i,s)(j)}(\fim)$ has a $t$-multiset common to all of its $k$-multisets only if $\fim$ does.  If $k=t$, the result is trivial so we assume that $k >t$.    

Let $S_{(i,s)(j)}(\fim)= \fim\pr$.  Assume that there exists a $t$-multiset, $X \in \multichoose{[m]}{t}$, such that $$\fim\pr = \left\{ B \in \multichoose {[m]}{k} : X \subseteq B\right\}\,.$$  As in the proof of Lemma~\ref{downcomp maintains structure}, we assume that $j \in B$ and $\mathrm{m}(i,B) = s-1$ for some $B \in \fim\pr$.  Then $\mathrm{m}(i,X) \leq s-1$ and either $B \in \fim$ or $$A = (B\backslash\{(j,1),\dots,(j,\mathrm{m}(j,B))\}) \cup \{(i,s),\dots,(i,s\!+\!\mathrm{m}(j,B)\!-\!1)\}\in \fim.$$

\begin{enumerate}
\item Case 1:  $j \notin X$.

Since $\mathrm{m}(i,A) > \mathrm{m}(i,B)$, it follows that $X \subset A$ and so $A \in \fim\pr$ and $A \in \fim$.  This implies that $B \in \fim$.  Therefore $\fim= \fim\pr$.  

\item Case 2:  $j \in X$ and $\mathrm{m}(i,X) \neq s-1$.

Let $B\pr = X \cup \{(j,\mathrm{m}(j,X)+1),\dots,(j,k\!-\!t\!+\!\mathrm{m}(j,X))\}$.  Then $B\pr \in \fim\pr$ and $\mathrm{m}(i,B\pr) \neq s-1$.  Thus $B\pr \in \fim$.  But $j \notin A$ and so $\left| A \cap B\pr \right| <t$.  Since $\fim$ is $t$-intersecting, it follows that $A \notin \fim$ and therefore $B \in \fim$.  Thus $\fim= \fim\pr$. 

\item Case 3:  $j \in X$ and $\mathrm{m}(i,X) = s-1$.

Partition $\fim\pr$ as follows.

Let
\begin{align*} \fim\pr_{1} &=\{B \in \fim\pr: \mathrm{m}(i,B)=s-1\}
\intertext {and let}
 \fim\pr_{0}&= \fim\pr\backslash \fim\pr_{1}.
\end{align*}
Then $\fim\pr_{0} \subset \fim$ since the multiplicity of $i$ will be greater than $s-1$ in all multisets in $\fim\pr_{0}$. 

We now show that either all of the multisets in $\fim\pr_{1}$ are in $\fim$ or all were changed by the down-compression operation.  Define a graph, $G$, with the multisets in $\fim\pr_{1}$ as its vertex set and where two vertices are adjacent if and only if they exactly $t$-intersect.  In other words, two vertices, $B_{1}$ and $B_{2}$ , are adjacent if and only if $B_{1} \cap B_{2} = X$.  Then if $B_{1}$ and $B_{2}$ are adjacent vertices in $G$, either both are in $\fim$ or both were changed by the down-compression operation since if only one changed $\fim$ would not $t$-intersect.  From this it follows that if $G$ is connected, either all multisets in $\fim\pr_{1}$ are in $\fim$ or all were changed by the down-compression operation.

Let $B_{1}$ and $B_{2}$ be any two multisets from $\fim\pr_{1}$.  Since $k > t$ and $m \geq 2k-t$, it follows that $m > k$, and thus there is some $x \in [m]$ such that $x \notin B_{1}$.  Let $C_{1}\!=\!X\, \cup \{(x,1),\dots,(x,k-t)\}$. Then $C_{1}\! \in\! \fim\pr_{1}$ and $B_{1} \cap C_{1} = X$.  Therefore $B_{1}$ is adjacent to $C_{1}$ in $G$.   If $x \notin B_{2}$, then $B_{2}$ is adjacent to $C_{1}$ and thus there is a path between $B_{1}$ and $B_{2}$.  Suppose that $x \in B_{2}$. Then there is some $y \neq x \in [m]$ such that $y \notin B_{2}$.  Let $C_{2} = X \,\cup \,\{(y,1),\dots,(y,k-t)\}$.   Then $B_{2}$ and $C_{1}$ are adjacent to $C_{2}$ and thus there is a path between $B_{1}$ and $B_{2}$.  Since $B_{1}$ and $B_{2}$ were arbitrary vertices of $G$, it follows that $G$ is connected.  Therefore either all of the multisets in $\fim\pr_{1}$ are in $\fim$ and $\fim=\fim\pr$ or all were changed. 

Clearly, if all the multisets in $\fim\pr_{1}$ are in $\fim$ then $\fim=\fim\pr$.  If all of the multisets in $\fim\pr_{1}$ were changed by the shifting operation, we claim that $\fim$ consists of all of the $k$-multisets containing the $t$-multiset $(X\backslash\{(j,1)\}) \cup \{(i, \mathrm{m}(i,X)\!+\!1)\}$.  

Suppose that all of the multisets in $\fim\pr_{1}$ were changed by the shifting operation.  Let $\fim_{1} \subset \fim$ be the collection of multisets such that $S_{(i,s)(j)}(\fim_{1}) = \fim\pr_{1}$.   Choose $B \in \fim\pr_{1}$ such that $j \notin B\backslash X$ and let $A$ be the multiset in $\fim_{1}$ such that $S_{(i,s)(j)}(A) = B$.  Then $j \notin A$ and $\mathrm{m}(i,A) = \mathrm{m}(i,X) + \mathrm{m}(j,X) \geq \mathrm{m}(i,X) + 1$.   Let $$C= X \cup \{(i,s)\} \cup \{(j,\mathrm{m}(j,X)+1),\dots,(j,k-t+\mathrm{m}(j,X)-1)\}.$$ Then $C \in \fim\pr_{0} \subset \fim$ and so $\left| A \cap C \right| \geq t$ since $\fim$ is $t$-intersecting.   But $\left| A \cap C \right| = t - \mathrm{m}(j,X) + 1$, so $\mathrm{m}(j,X)=1$.  It follows that $(X\backslash\{(j,1)\}) \cup \{(i, \mathrm{m}(i,X)\!+\!1)\} \subset A$ for all $A \in \fim_{1}$.   That $(X\backslash\{(j,1)\}) \cup \{(i, \mathrm{m}(i,X)\!+\!1)\} \subseteq A$  for all $A \in \fim\pr_{0} $ follows from the fact that $\mathrm{m}(i,A) > s-1$.
\end{enumerate}
Thus if $S_{(i,s)(j)}(\fim)$ consists of all the $k$-multisets of $[m]$ containing some $t$-multiset $X$, then $\fim$ consists of all the $k$-multisets of $[m]$ containing $X$ or $\fim$ consists of all the $k$-multisets of $[m]$ containing the $t$-multiset $(X\backslash\{(j,1)\}) \cup \{(i, \mathrm{m}(i,X)\!+\!1)\}$. \qed
\end{proof}

The homomorphism of $K(n,k,t)$ to $M\pr(m,k,t)$ can be used in conjunction with the down-compression operation and known results for set systems to prove a $t$-intersecting versions of Theorem~\ref{Hilton Milner}.   Let
 $$\widetilde{\fis}_{t} = \left\{F \in \binom{[n]}{k}: [t] \in F \mbox { and } F \cap [t\!+\!1,k\!+\!1] \neq \emptyset\right\} \cup \bigg\{ \bigcup_{i \in [t]}[k+1]\backslash \{i\}\bigg\}.$$ 
Then $\widetilde{\fis}_{t}$ is a $t$-intersecting family of $k$-subsets of $[n]$ which does not have a $t$-set common to all $k$-subsets.

 We use the following theorem.  

\begin{theorem}[Ahlswede and Khachatrian~\cite{MR1405994}]\label{HM t>1}
Let $ 1 <t < k$ and let $\fis$ be a $t$-intersecting family of $k$-sets from $[n]$ such that $$\left|\bigcap_{F\in\fis} F\, \right| < t.$$ 

\begin{enumerate}
\item If $(t+1)(k-t+1) < n$ and $k \leq 2t+1$, then $\left| \fis \right| \leq \left| \fis_{1}\right|$ and only families isomorphic to $\sfs{1}$ will attain this size.
\item If $(t+1)(k-t+1) < n$ and $k > 2t+1$, then $\left| \fis \right| \leq \max\left\{ \left|\fis_{1}\right|, | \widetilde{\fis_{t}}|\right\}$ and only families isomorphic to $\sfs{1}$ or $\widetilde{\fis_{t}}$ will attain this size.
\end{enumerate}
\end{theorem}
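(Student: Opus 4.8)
This is the Ahlswede--Khachatrian nontrivial $t$-intersection theorem, quoted here as a tool; I sketch only the line of attack, taking the complete intersection theorem (Theorem~\ref{AK}) as the principal input. The plan is to combine left-shifting with Theorem~\ref{AK}. First I would reduce to a left-compressed family: the standard shift operators, which replace an occurrence of a larger element by a smaller unused one, preserve the $t$-intersecting property and never decrease $\left| \fis \right|$. The delicate point is that shifting can create a $t$-set common to every member, so I would begin with a maximum nontrivial family and argue that either it is already shifted, or any shift that would destroy nontriviality already forces the family to be isomorphic to one of the two extremal configurations.

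Under the hypothesis $(t+1)(k-t+1) < n$, Theorem~\ref{AK} (case~1 with $r=0$) identifies the unique largest $t$-intersecting family as $\sfs{0}$, the collection of all $k$-sets containing a fixed $t$-set. Forbidding a common $t$-set excludes exactly this family, so what remains is to locate the largest nontrivial family, one tier below. For a shifted nontrivial family I would show that its structure is forced into one of two canonical shapes on the initial block: either every member meets $[t+2]$ in at least $t+1$ points, giving $\sfs{1}$, or every member contains $[t]$ and meets the tail $[t+1,k+1]$, which together with the $t$ exceptional sets gives $\widetilde{\fis_{t}}$. The split at $k=2t+1$ is then a direct size comparison: evaluating $\left| \sfs{1} \right|$ and $\left| \widetilde{\fis_{t}} \right|$ as explicit functions of $n,k,t$ shows that the former dominates precisely when $k \leq 2t+1$, while uniqueness in each regime descends from the uniqueness clause of Theorem~\ref{AK}.

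The step I expect to be hardest is the structural reduction: proving that no shifted nontrivial family can lie strictly between $\sfs{0}$ and these two candidates. This is the technical heart of the Ahlswede--Khachatrian argument and does not follow from a soft application of Theorem~\ref{AK}; a self-contained treatment would have to follow how the $t$-intersecting condition propagates under shifting restricted to the first $t+2$ coordinates and verify that any departure from the two canonical shapes strictly lowers the cardinality.
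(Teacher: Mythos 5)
The first thing to note is that the paper does not prove this statement at all: Theorem~\ref{HM t>1} is quoted verbatim from Ahlswede and Khachatrian~\cite{MR1405994} and used as a black box (its only role is to feed into Theorem~\ref{thm:non-trivial t-int}), so there is no in-paper proof to compare yours against. Your proposal therefore has to stand on its own, and it does not: you correctly identify the architecture that the known proofs follow --- reduce to a left-compressed family, classify the compressed nontrivial families of maximum size, then compare $\left|\sfs{1}\right|$ with $|\widetilde{\fis}_{t}|$ --- but the middle step, which you yourself flag as ``the technical heart,'' \emph{is} the entire content of the theorem. Showing that a shifted, maximal, nontrivially $t$-intersecting family must collapse onto one of the two canonical shapes is precisely what Ahlswede and Khachatrian's generating-set (``pushing--pulling'') machinery was developed to do; it does not follow from Theorem~\ref{AK} plus compression in any soft way, and restricting attention to how the $t$-intersecting condition propagates on the first $t+2$ coordinates is not known to substitute for it.

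There is also a gap earlier in your outline. The claim that a maximum nontrivial family ``either is already shifted, or any shift that would destroy nontriviality already forces the family to be isomorphic to one of the two extremal configurations'' is itself a nontrivial lemma, not a reduction one may assume. The standard treatments (Hilton--Milner for $t=1$, Frankl for large $n$, Ahlswede--Khachatrian in general) stop the compression process just before nontriviality would be destroyed and then carry out a careful structural analysis of the family frozen at that stage; that analysis is comparable in difficulty to the classification step itself. So your write-up is an accurate road map of where the difficulties lie, but both of its load-bearing steps are left unproven; as a proof it is incomplete, which is consistent with the fact that the paper itself chose to cite this result rather than prove it.
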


We define a $k$-multiset system, $\widetilde{\fim}_{t}$, as follows:
$$\widetilde{\fim}_{t}= \{ A \in \multichoose{[m]}{k} : [t] \subseteq A, A \cap [t\!+\!1,k\!+\!1] \neq \emptyset\} \cup \{[k\!+\!1]\backslash\{i\}, i=1,\dots,t\}.$$  Then $\widetilde{\fim}_{t}$ is a $t$-intersecting collection of $k$-multisets of $[m]$ which does not have a $t$-multiset common to all $k$-multisets.

\begin{theorem}\label{thm:non-trivial t-int}
For $1<t < k$ and $m \geq 2k -t$, let $\fim$ be a $t$-intersecting family of $k$-multisets from $[m]$ such that $$\left| \bigcap_{A \in\fim}A\; \right| < t.$$

\begin{enumerate}
\item  If $m > t(k-t)+2$ and $k > 2t+1$, then $\left| \fim \right| \leq \left| \sfm{1} \right|$.  Equality holds if and only if $\fim$ is isomorphic to $\sfm{1}$.  

\item  If $m > t(k-t)+ 2$ and $k >2t+1$, then $\left| \fim \right| \leq \max \{ \left| \sfm{1} \right|, | \widetilde{\fim}_{t} |\}$.  Equality holds if and only if $\fim$ is isomorphic to the larger of $\sfm{1}$ and $\widetilde{\fim}_{t}$.
\end{enumerate}

\end{theorem}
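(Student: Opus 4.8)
The plan is to mirror the proof of Theorem~\ref{Hilton Milner}, replacing the ingredients by their $t$-intersecting analogues: the homomorphism of Proposition~\ref{t-int hom} in place of Proposition~\ref{homomorphism}, the Ahlswede--Khachatrian Hilton--Milner theorem (Theorem~\ref{HM t>1}) in place of Theorem~\ref{HM}, and the down-compression operation of Theorem~\ref{down comp} to pass from an arbitrary $t$-intersecting family to one that is an independent set in $M\pr(m,k,t)$. First I would record the identity $(t+1)(k-t+1) = t(k-t)+k+1$, so that the hypothesis $m > t(k-t)+2$ is exactly equivalent to $n=m+k-1 > (t+1)(k-t+1)$, the regime covered by Theorem~\ref{HM t>1}; the condition $m \geq 2k-t$ is what permits invoking Theorem~\ref{down comp}. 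Now let $\fim$ be a non-trivial $t$-intersecting family of maximum size. Repeatedly down-compressing yields $\widehat{\fim}$ with $|\widehat{\fim}|=|\fim|$ and $|F_{1}\cap F_{2}\cap[m]|\geq t$ for all $F_{1},F_{2}$, i.e.\ an independent set in $M\pr(m,k,t)$. Setting $\fis=f^{-1}(\widehat{\fim})$ for $f$ as in Proposition~\ref{t-int hom}, $\fis$ is a $t$-intersecting family of $k$-subsets of $[n]$ with $|\fis|=|\fim|$ in which every pair meets in at least $t$ points of $[m]$. The argument then splits according to whether $\fis$ is trivial.

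If $\fis$ is non-trivial, Theorem~\ref{HM t>1} gives $|\fis|\leq \max\{|\sfs{1}|,|\widetilde{\fis}_{t}|\}$, and since $|\sfs{i}|=|\sfm{i}|$ and $|\widetilde{\fis}_{t}|=|\widetilde{\fim}_{t}|$ this is exactly the stated bound. For the extremal structure, Theorem~\ref{HM t>1} forces $\fis$ to be isomorphic to $\sfs{1}$ (when $k\leq 2t+1$) or to one of $\sfs{1}$, $\widetilde{\fis}_{t}$ (when $k>2t+1$). I would transfer this back through $f$ as in the uniqueness argument of Theorem~\ref{Hilton Milner}: the support-preserving property $S_{f(A)}=A\cap[m]$ shows that the defining set (the $(t+2)$-set for $\sfs{1}$, or $[t]$ together with $[t+1,k+1]$ for $\widetilde{\fis}_{t}$) must lie inside $[m]$, since any witnessing element outside $[m]$ would produce two multisets of $\widehat{\fim}$ whose supports meet in fewer than $t$ points, contradicting independence in $M\pr(m,k,t)$. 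This gives $\widehat{\fim}\cong\sfm{1}$ or $\widehat{\fim}\cong\widetilde{\fim}_{t}$. Finally I would pull the structure back across the down-compression: for $\widehat{\fim}\cong\sfm{1}$ this is precisely Lemma~\ref{downcomp maintains structure} with $r=1$, while for $\widehat{\fim}\cong\widetilde{\fim}_{t}$ I would prove a companion lemma, in the spirit of Lemma~\ref{downcomp maintains structure}, showing that a single shift $\mathcal S_{(i,s)(j)}$ cannot convert a family not isomorphic to $\widetilde{\fim}_{t}$ into one that is.

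If $\fis$ is trivial it has a common $t$-set $Y$, so $|\fis|\leq|\sfs{0}|$, and I would show this case cannot arise from a non-trivial $\fim$. The extra property that every pair of $\fis$ meets in at least $t$ points of $[m]$ first forces $Y\subseteq[m]$ (an element of $Y$ outside $[m]$ again yields a pair of $\widehat{\fim}$ with support-intersection smaller than $t$), whence every member of $\widehat{\fim}$ contains the $t$-set $Y$; using the maximality of $\fim$ one argues $\widehat{\fim}$ is the full family of all $k$-multisets containing $Y$, so Lemma~\ref{structure if r=0} forces $\fim$ to be the full family of all $k$-multisets containing a fixed $t$-set. Such a family satisfies $\left|\bigcap_{A\in\fim}A\right|\geq t$, contradicting non-triviality. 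Hence the trivial case is vacuous and the bound and structure come from the non-trivial case; the matching lower bound is immediate because $\sfm{1}$ and $\widetilde{\fim}_{t}$ are themselves non-trivial $t$-intersecting families.

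The two steps I expect to be the main obstacles are, first, the structure transfer for $\widetilde{\fim}_{t}$ across down-compression, which is \emph{not} covered by the lemmas already proved (these handle only the families $\sfm{r}$ and the full trivial family) and will require a dedicated argument tracking exactly which multisets a shift can create and destroy; and second, the careful treatment of the trivial case, where one must rule out that a genuinely non-trivial $\fim$ with $|\fim|>\max\{|\sfm{1}|,|\widetilde{\fim}_{t}|\}$ down-compresses into a trivial family, i.e.\ establish that down-compression does not destroy non-triviality in the relevant size range.
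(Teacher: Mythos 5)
Your overall route is the same as the paper's: down-compress via Theorem~\ref{down comp}, pull back through the homomorphism of Proposition~\ref{t-int hom}, apply Theorem~\ref{HM t>1}, and transfer the extremal structure back using Lemma~\ref{downcomp maintains structure} for $\sfm{1}$ together with a still-to-be-proved companion lemma for $\widetilde{\fim}_{t}$. The paper likewise leaves that companion lemma unproved (it only asserts that the proof is ``similar to that of Lemma~\ref{downcomp maintains structure} but requires the consideration of more cases''), so flagging it as an obstacle rather than proving it puts you on par with the paper there.

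The genuine gap is in your treatment of the trivial case, where both of the specific claims you make fail. First, a common element $y\in Y\setminus[m]$ of $\fis=f^{-1}(\widehat{\fim})$ does \emph{not} produce a pair of multisets in $\widehat{\fim}$ whose supports meet in fewer than $t$ points: after full down-compression every pair in $\widehat{\fim}$ has support-intersection at least $t$ by construction, and this is entirely compatible with $\bigcap_{B\in\fis}B$ containing elements of $[m\!+\!1,n]$. For instance, with $t=2$, $k=4$, the sets $\{1,2,3,n\}$, $\{1,2,4,n\}$, $\{1,3,4,n\}$ pairwise meet in two points of $[m]$ yet have common intersection $\{1,n\}$, which is not contained in $[m]$. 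The analogous step in the uniqueness part of Theorem~\ref{Hilton Milner} works only because there the family is known to consist of \emph{all} sets with the Hilton--Milner structure, so the offending ``bad pairs'' can be exhibited explicitly; here you only know the containment $\fis\subseteq\{B: Y\subseteq B\}$, which lets you construct nothing. Second, maximality of $\fim$ does not force a trivial $\widehat{\fim}$ to be the \emph{full} family of all $k$-multisets containing $Y$: $\fim$ is maximum only among \emph{non-trivial} families, and in the range $m>t(k-t)+2$ Theorem~\ref{AK} (case $r=0$) makes the full trivial family strictly larger than $\max\{|\sfm{1}|,|\widetilde{\fim}_{t}|\}$, so a trivial $\widehat{\fim}$ of size $\max\{|\sfm{1}|,|\widetilde{\fim}_{t}|\}$ could perfectly well be a proper subfamily of a trivial family. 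Lemma~\ref{structure if r=0} requires $\widehat{\fim}$ to consist of \emph{all} $k$-multisets containing a fixed $t$-multiset and therefore cannot be invoked. What is actually needed is a subfamily strengthening of Lemma~\ref{structure if r=0} --- if a down-compression shift $\mathcal S_{(i,s)(j)}$ of a $t$-intersecting family has a common $t$-multiset, then so did the original family --- and this, like the missing $\widetilde{\fim}_{t}$ lemma, requires its own case analysis; to be fair, the paper's sketch skips this point silently, but your proposed shortcut around it does not work.
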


When $m \geq 2k-t$, the down-compression operation can be used to transform any $t$-intersecting family of $k$-multisets, $\fim$, into one in which $\left| F_{1} \cap F_{2} \cap [m] \right| \geq t$ for all $F_{1},F_{2} \in \fim$.  If the condition that $\left| F_{1} \cap F_{2} \cap [m] \right| \geq t$ for all $F_{1},F_{2} \in \fim$ is added to Theorem~\ref{thm:non-trivial t-int}, the result can be proved using Theorem~\ref{HM t>1} and the graph homomorphism from $K(n,k,t)$ to $M\pr(m,k,t)$ in a manner similar to the proof of Theorem~\ref{Hilton Milner}.  By Lemma~\ref{downcomp maintains structure}, we know that the down-compression operation will give a family isomorphic to $\sfm{1}$ only if the original family was isomorphic to $\sfm{1}$.  It only remains to be shown that a similar result holds when the down-compressed family is isomorphic to $\widetilde{\fim}_{t}$.  This can be proved using the definition of the down-compression operation and the fact that $\fim$ is $t$-intersecting.  The proof is similar to that of Lemma~\ref{downcomp maintains structure} but requires the consideration of more cases.

\section{Open questions}


The graph homomorphism method used in this paper provides a convenient way to prove results for intersecting families of multisets based on known results for sets, but there are questions that cannot be approached in this manner.  For $t>1$, the graph homomorphism requires that the supports of the multisets $t$-intersect.  Althought the down-compression operation can be used to transform any $t$-intersecting family into one that satisfies this condition, it requires $m\geq 2k-t$.  Also, the graph homomorphism cannot be used to give a multiset version of Theorem~\ref{thmHJ} for $t>1$ since the down-compression operation does not preserve the $P(s,t)$ property. 

When $n \leq 2k-t$, the collection of all $k$-subsets of $[n]$ is $t$-intersecting.  No similar result holds for collections of $k$-multisets of $[m]$ for any $m >1$.  If $m=2$, it is easy to see that any $t$-intersecting family will have a $t$-multiset common to all multisets in the family.  Thus the question of the size and structure of the largest $t$-intersecting family of $k$-multisets of $[m]$ with $t >1$ is open for $2 < m < 2k-t$.  We conjecture that, for all values of $m$, $k$ and $t$, the largest family is the collection of all $k$-multisets containing at least $t+r$ elements from a $(t\!+\!2r)$-multiset where $0 \leq r \leq k-t$.  We further conjecture that the cardinality of the support of the $(t\!+\!2r)$-multiset is equal to the lesser of $m$ and $t+2r$ and that the multiplicities of the elements in the $(t\!+\!2r)$-multiset will vary by at most one unless the multiplicity of all of the elements is greater than or equal to $r$.

If we consider only those $t$-intersecting families of multisets where the supports are $t$-intersecting, the graph homomorphism method gives an upper bound on the size of the largest family for all $m > k-t+1$.  However, this bound may not be attainable.  For example, consider the case when $k=4$ and $t=2$.  Let $\fim$ be a $t$-intersecting family of multisets such that $\left| F_{1} \cap F_{2} \cap [m] \right| \geq t$ for all $F_{1},F_{2} \in \fim$.  If $m=5$, the graph homomorphism method gives $| \sfs{1} |$ as an upper bound on the size of $\fim$ and this size is attained by $\sfm{1}$.  If $m=4$, then $| \fim | \leq | \sfs{2} |$ but a family isomorphic to $\sfm{2}$ cannot be constructed since  $m < t+2r$.

Another obvious question arises from Theorem~\ref{HM t>1}.  When $k > 2t+1$, Theorem~\ref{HM t>1} gives two possible options for the largest family.  However, Frankl proved in~\cite{Frankl} that $\widetilde{\fis}_{t}$ is the largest family when $k > 2t+1$ provided that $n$ is sufficiently large.  Determining precisely how large $n$ must be would give a bound on $m$ for multiset systems above which $\widetilde{\fim}_{t}$ is the largest family. 

\section{Acknowledgements}

The first author's research was supported in part by NSERC Discovery Grant RGPIN-341214-2013.  The second author was supported by an NSERC doctoral scholarship.  We would like to thank Zoltan F{\"u}redi for bringing reference~\cite{MR3033661} to our attention.

\end{document}